\def\eff{\text{\rm eff}}
\def\hom{\text{\rm hom}}
\def\Port{\mathcal P_{\bot}}
\def\P{\mathcal P}
\def\e{\varepsilon}
\DeclareMathOperator*{\dom}{\mathrm{dom}}
\DeclareMathOperator*{\diag}{\mathrm{diag}}
\DeclareMathOperator*{\ran}{\mathrm{ran}}
\newtheorem{theorem}{Theorem}[section]
\newtheorem{proposition}[theorem]{Proposition}
\newtheorem*{conjecture}{Conjecture}
\newtheorem{definition}[theorem]{Definition}
\newtheorem{lemma}[theorem]{Lemma}
\newtheorem{remark}[theorem]{Remark}
\begin{document}
\title[Laplacians on graph-convergent manifolds]{Norm-resolvent convergence for Neumann Laplacians on manifolds thinning to graphs}
\author{Kirill D. Cherednichenko}
\address{Department of Mathematical Sciences, University of Bath, Claverton Down, Bath, BA2 7AY, United Kingdom}
\email{k.cherednichenko@bath.ac.uk}
\author{Yulia Yu. Ershova}
\address{Department of Mathematics, Texas A\&M University,
	College Station, TX 77843-3368, USA}
\email{julija.ershova@gmail.com}
\author{Alexander V. Kiselev}
\address{Department of Mathematical Sciences, University of Bath, Claverton Down, Bath, BA2 7AY, United Kingdom}
\email{alexander.v.kiselev@gmail.com}
\subjclass[2020]{Primary 35C20, 35J25; Secondary 47A10, 47A20, 81Q35, 74K99}

\keywords{PDE, Quantum graphs, Generalised resolvent, Thin structures, Norm-resolvent asymptotics}

\begin{abstract}
Norm-resolvent convergence with an order-sharp error estimate is established for Neumann Laplacians on thin domains in $\mathbb{R}^d,$
$d\ge2,$ converging to metric graphs in the limit of vanishing thickness parameter in the ``resonant" case. The vertex matching conditions of the limiting quantum graph are revealed as being closely related to those
of  $\delta'$ type.
\end{abstract}

\maketitle

\section{Introduction}

In the book \cite{Post}, see also references therein, in particular \cite{Exner,KuchmentZeng,KuchmentZeng2004}, Neumann Laplacians $A_\e$ on thin manifolds, converging to metric graphs  as $\varepsilon\to0$, were studied, see, e.g., Fig.\,\ref{fig:kaplya1}. Here $\varepsilon$ represents the ``thickness'' of the manifolds in those parts where they converge to the graph edges. The named works attacked the question of spectral (and, in the case of \cite{Post}, norm-resolvent) convergence of such partial differential operators to a graph Laplacian with certain matching conditions at the graph vertices. This latter ordinary differential operator is introduced as follows. Denoting by $E$ the set of edges $e$ of the limiting graph $G,$ each $e\in E$ can be identified with the interval $[0,l_e],$ where $l_e$ is the length of $e$. We use notation $L^2(e)$  for the associated Hilbert space $L^2(0,l_e).$ Similarly, we denote $L^2(G):= \oplus_e L^2(e)$. The graph Laplacian $A_G$ is generated by the differential expression $-u'',$ $u\in H^2(e)$ on each edge $e\in E$ separately (see \cite{Kuchment2} for details), subject to the vertex conditions discussed below.

It was proved in \cite{Exner, KuchmentZeng, KuchmentZeng2004} that, within any compact $K\in \mathbb{C},$ the spectra of $A_\e$ converge in the Hausdorff sense to the spectrum of a graph Laplacian $A_G$. In the book \cite{Post}, the claimed convergence was enhanced to the norm-resolvent type, with an explicit control of the error as $O(\varepsilon^\gamma),$ where $\gamma>0$ depends on whether the ambient space is two-dimensional. The matching conditions at the vertices of the limiting graph turn out to be either of:
\begin{itemize}
  \item[(i)] Kirchhoff (i.e. standard), if the vertex volumes are decaying, as $\varepsilon\to0,$ faster than the edge volumes;
  \item[(ii)] ``Resonant", which can be equivalently described in terms of $\delta$-type matching conditions with coupling constants proportional to the spectral parameter $z,$ see \cite{KuchmentZeng, Exner}, if the vertex and edge volumes are of the same order;
  \item[(iii)] ``Dirichlet-decoupled" (i.e., the graph Laplacian becomes completely decoupled), if the vertex volumes vanish slower than the edge ones.
\end{itemize}

We also refer the reader to the frequently overlooked papers \cite{MPP, Pavlov_2008}, where an alternative approach to the asymptotic analysis of thin networks, aimed at capturing the resonant and scattering features, is developed, see also the references therein. In the present paper we consider the case of scalar PDEs --- for recent progress in the analysis of PDE systems, set in the context of thin elastic structures (albeit not forming networks), see \cite{ChV, ChVZ, BChVZ}.

In the present paper we are primarily interested in the most non-trivial resonant case (ii). We provide a straightforward, alternative to \cite{Post}, proof of the fact that the Neumann Laplacians $A_\e$ in this case converge in norm-resolvent sense to a linear operator acting in the Hilbert space $L^2(G)\oplus \mathbb{C}^N$, where $N$ is the number of vertices. The mentioned limiting operator is in fact the one first pointed out in \cite{KuchmentZeng2004} as a self-adjoint operator whose spectrum coincides with the Hausdorff limit of spectra for the family $A_\e$.

On the technical side, our approach can be seen as  a modification of the one developed by us in \cite{ChEK, ChKVZ}, see also \cite{Physics, GrandePreuve, CherKis, CherKisSilva}. We specifically point out that the framework originally developed for high-contrast homogenisation admits a natural generalisation to setups where the ``contrast" is achieved by purely geometric means, including (but not limited to) thin networks  as in the present paper. This seems to widen significantly the range of dimension-reduction type models that are amenable to this kind of analysis and thus to establish a transparent connection between previously unrelated physical contexts, providing for a possibility to develop new types of media in materials science.

We obtain a better error bound than \cite[Section 6.7]{Post} (and without visiting the ``plumber's shop" of \cite{RSch}). Our estimate in the planar case is $O(\e|\log\e|)$ and in the case of $\mathbb{R}^d,$ $d\ge3,$ it is $O(\e).$ 
Unlike that of \cite{Post}, our method does not allow us to study the full set of asymptotic regimes in (iii).
This is due to the fact that the argument of our paper \cite{ChEK} is based on the Dirichlet-to-Neumann (DN) machinery. There is a possibility to modify the approach by invoking Neumann-to-Dirichlet maps instead, which would have two advantages: one could consider all rates of vertex volume decay in (iii), and certain geometric smoothness requirements could be somewhat relaxed.
Nevertheless, in this paper we stick with the DN version of the approach, in order to align the exposition with that of \cite{ChEK}. The alternative strategy will be followed up elsewhere, both in the present context and in the setting of \cite{ChEK}.

The above results of course imply the Hausdorff spectral convergence, at the same time yielding a sharp estimate on its rate. Moreover, in contrast to \cite{Post}, our approach allows one to consider ``high-frequency" regimes, i.e., setups where the spectral parameter (which in the wave propagation context may represent the square of the frequency) is no longer constrained to a compact set but is still constrained by some negative power of the small parameter $\e$. (In non-dimensional terms this corresponds to the wavelength being of the order of some positive power of $\e$). This leads to a sequence of ``effective", dimensionally reduced, models of the thin structure, which are sequentially applicable for a set of (asymptotic) frequency intervals. The complexity of the dimension reduction process for these models increases along the sequence. While an initial result regarding the high-frequency situation is presented below, we postpone the full analysis to a future publication.

Alongside the high-frequency analysis, yet another sequence of models will be revealed by a version of the same argument. This corresponds to the transition from the resonant regime to the Dirichet-decoupled one and will allow us to reconcile the asymptotic analysis of \cite{Post} with that of \cite{Pavlov_2008}, by introducing ``transitional" models of increasing complexity.  In these transitional regimes, from the point of view of \cite{Post} one gets arbitrarily close to the Dirichlet-decoupled situation, see (iii) above, whereas from the point of view of \cite{Pavlov_2008} (where the vertex volumes do not decay at all) one faces a highly non-trivial picture of resonant scattering.

Aiming at better clarity, in the present paper we restrict ourselves to the case where (without much loss of generality) the edge subdomains are assumed straight and uniformly thin, whereas the vertex subdomains are smooth with, possibly, the exception of the points where they meet the edges (see Section \ref{setup_section} for further details). These assumptions appear very natural, in view of the possible examples shown in Fig.\,\ref{fig:kaplya1}, \ref{fig:kaplya}.

\begin{figure}[h!]
	\begin{center}
		\def\svgwidth{.8\textwidth}
		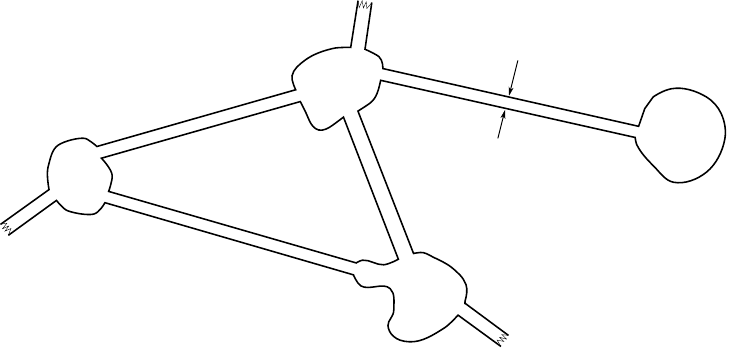
	\end{center}
	\caption{An example of a thin network in the case of a resonant scaling of the edge thickness and vertex diameter.  The (non-dimensional) parameter $\varepsilon$ is the ratio of the actual size to the wavelength.  As $\varepsilon\to0,$ the structure converges to a graph; the related notation is introduced at the start of Section~\ref{setup_section}. \label{fig:kaplya1}}
\end{figure}

\begin{figure}[h!]
	\begin{center}
		\def\svgwidth{.5\textwidth}
\begingroup%
  \makeatletter%
  \providecommand\color[2][]{%
    \errmessage{(Inkscape) Color is used for the text in Inkscape, but the package 'color.sty' is not loaded}%
    \renewcommand\color[2][]{}%
  }%
  \providecommand\transparent[1]{%
    \errmessage{(Inkscape) Transparency is used (non-zero) for the text in Inkscape, but the package 'transparent.sty' is not loaded}%
    \renewcommand\transparent[1]{}%
  }%
  \providecommand\rotatebox[2]{#2}%
  \newcommand*\fsize{\dimexpr\f@size pt\relax}%
  \newcommand*\lineheight[1]{\fontsize{\fsize}{#1\fsize}\selectfont}%
  \ifx\svgwidth\undefined%
    \setlength{\unitlength}{178.68518961bp}%
    \ifx\svgscale\undefined%
      \relax%
    \else%
      \setlength{\unitlength}{\unitlength * \real{\svgscale}}%
    \fi%
  \else%
    \setlength{\unitlength}{\svgwidth}%
  \fi%
  \global\let\svgwidth\undefined%
  \global\let\svgscale\undefined%
  \makeatother%
  \begin{picture}(1,0.72904221)%
    \lineheight{1}%
    \setlength\tabcolsep{0pt}%
    \put(0,0){\includegraphics[width=\unitlength,page=1]{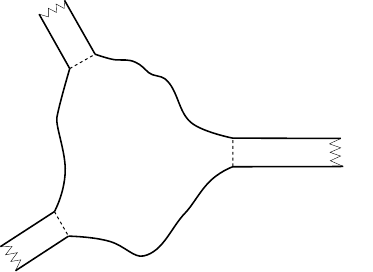}}%
    \put(0.53922316,0.30270456){\makebox(0,0)[lt]{\lineheight{1.25}\smash{\begin{tabular}[t]{l}$\Gamma^\e_{ev}$\end{tabular}}}}%
    \put(0.7923637,0.29965467){\makebox(0,0)[lt]{\lineheight{1.25}\smash{\begin{tabular}[t]{l}$Q_e$\end{tabular}}}}%
    \put(0.30895671,0.31185434){\makebox(0,0)[lt]{\lineheight{1.25}\smash{\begin{tabular}[t]{l}$Q_v$\end{tabular}}}}%
  \end{picture}%
\endgroup%

	\end{center}
	\caption{An example of a vertex subdomain $Q_v$ and an edge subdomain $Q_e$ separated by a contact plate $\Gamma_v^\varepsilon$ of linear size $\varepsilon.$ The volumes of $Q_v$ and $Q_e$ are assumed to be of the same order $\varepsilon^{d-1},$ which is also the area of the contact plate.  As in Fig.\,\ref{fig:kaplya}, the (non-dimensional) parameter $\varepsilon$ is the ratio of the actual size to the wavelength. \label{fig:kaplya}}
\end{figure}

\section{Problem setup and preliminaries}
\label{setup_section}


In what follows, we consider a prototypical setup only, which already presents all the challenges appearing in the general case. We also refer the reader to \cite{Post}, where the most general setup is carefully introduced.

For the limiting graph,
the following notation will be used: the metric graph $G$ will be identified with the set of edges $E$, so each individual edge is denoted by $e\in E$ and is associated with an interval $[0,l_e]$. We denote by $V$ the set of graph vertices and treat each $v\in V$ as the set of edge endpoints meeting at $v$. The graph $G$ is assumed to be oriented throughout.

Proceeding to the setup for the Neumann Laplacian on a thin graph-convergent structure, let a connected domain $Q$ be the union of the ``vertex" part $Q_V$ and the ``edge" part $Q_E$, where $Q_E$ will be assumed to be a finite collection of $\e$-thin cylinders (for $d=2,$ rectangular boxes), $Q_E=\cup_e Q_e$ of lengths $l_e.$ For each $e$, the domain $Q_e$ is assumed to be, up to a linear spatial transform (i.e. shift and rotation), defined by (for $d=2$)
$$
Q_e=\bigl\{x\in \mathbb R^2: x_1\in (0,l_e), x_2\in (0,\e)\bigr\}=(0, l_e)\times(0,\varepsilon).
$$
In dimensions $d\ge3,$ the set $Q_e$ is defined as the direct product of the interval $(0, l_e)$ and a smooth cross-section $Q_e^{\rm c}$ of volume $\varepsilon^{d-1}.$

It is further assumed that $Q_V=\cup_{v} Q_v$, where each of the disjoint domains $Q_v$ is connected
and has piecewise smooth boundary $\partial Q_v,$ which can be decomposed as $\partial Q_v=\widetilde{\Gamma}^\varepsilon_v\cup \Gamma^\e_v$, where $\Gamma^\e_v$ is the interface between the vertex and edge domains and $\widetilde{\Gamma}^\varepsilon_v$ is the remaining part of $\partial Q_v.$ Henceforth, we often drop the dependence on $\varepsilon$ in the notation for those geometric elements where it can be removed by a rescaling or is otherwise irrelevant in the analysis to follow.

The contact part  $\Gamma^\e_v$ is further decomposed into a union of flat plates (for $d=2,$ straight segments): $\Gamma^\e_v=\cup\Gamma_{ev}^\e$. Here the union is taken over all edge domains $Q_e$ connected to $Q_v$ so that $\overline{Q}_e\cap \overline{Q}_v=\Gamma_{ev}^\e$. In what follows, we will refer to the segments $\Gamma_{ev}^\e$ as \emph{contact plates}. Since operators of the Zaremba (or mixed) boundary value problem \cite{Zaremba} will be used below, we further require that the contact plates $\Gamma_{ev}^\e$ meet $\widetilde{\Gamma}^\varepsilon_v$ at angles strictly less than $\pi$, see \cite{Brown,Brown2} for further details. We will further assume that the curves $\widetilde{\Gamma}^\varepsilon_v$ are smooth.

Furthermore, we assume that for all vertices $v$ the domains $Q_v$
have volumes of order $\varepsilon^{d-1}$ and are obtained by suitable transformations of an $\e$-independent domain $Q_{v}^0$ containing the origin. More precisely, we assume that for all $v$ and $\varepsilon>0$ one has
\begin{equation}
Q_v=
	\e^{\frac{d-1}{d}}{\mathcal T}_v^\varepsilon (Q_v^0)+b_v^\varepsilon,
\label{Qrep}
\end{equation}
where $b_v^\varepsilon\in\mathbb R^d,$ and the domain 
 ${\mathcal T}_v^\varepsilon (Q_v^0)$ is obtained from $Q_v^0$ by a suitable homeomorphism ${\mathcal T}_v^\varepsilon$ that: a) preserves volume; b) maintains the angles between the contact plates and the adjacent parts of the boundary to be uniformly less than $\pi,$ cf. \cite{Brown,Zaremba}.

 For example, if $Q_v$ is star-shaped with respect to the origin\footnote{A domain $Q_v$ is said to be star-shaped with respect to $x_v\in Q_v$ if for all $x\in Q_v$ one has $\{x_v+t(x-x_v), t\in[0,1]\}\subset Q_v.$}, the deformation ${\mathcal T}_v^\varepsilon$ can be constructed by ``cut-and-glue surgery" applied to $Q_v^0$ followed by a suitable ``radial" scaling, as follows. 
 The domain $Q_v^0$ is first split into several conical (sectorial for $d=2$) subdomains by making cuts alongside the cone ``generatrices" (``radii" for $d=2$) from the origin to the boundary-points (end-points for $d=2$) of all subsets of $\partial Q_v^0$ pertaining to the contact plates of $Q_v.$ Each cone is then transformed by a suitable change of variable so that: A) the images of the cone bases pertaining to the contact plates of $Q_v$ have linear size $\varepsilon^{1/d}$; B) the image of the complementary part of $Q_v^0$ is reattached to the lateral boundaries of the cones in A. Note that this procedure in general changes the domain volume by a small quantity.
In order to restore the volume, a suitable ``radial" scaling (centred at the origin) is applied, ensuring that it does not affect any of the mentioned conical subsets.

 We remark that the representation (\ref{Qrep}) guarantees that we are in the ``resonant'' (or ``borderline") case of \cite{Exner,KuchmentZeng}, i.e., that the volumes of the contact plates are proportional to the volumes of vertex domains.

The requirement that the homeomorphism ${\mathcal T}_v^\varepsilon$ in \eqref{Qrep} be volume-preserving is not essential and can be removed, as long as the ratios of the volumes of the vertex domains to the volumes of the contact plates are bounded below. In this case the corresponding final convergence estimates, similar to those we derive below (see Theorem \ref{thm:main_fin}), will in general depend on the said ratios, as is evident from the formula \eqref{eq:Rhom}.




On the domain $Q$ we consider a family of self-adjoint operators $A_\e$ defined by their sesquilinear forms and corresponding to the differential expression $-\Delta$ subject to Neumann boundary conditions. Other types of boundary conditions can be considered as well, including those of Robin and Dirichlet \cite{Grieser, MPP}, in which case the lower edge of the spectrum is $\varepsilon$-dependent, which in the context of homogenisation corresponds to the so-called high-frequency regime or, using the terminology of
M.~S.~Birman, to ``the neighbourhood of the edge of a gap" \cite{Birman_2004, CKP}.

Consider $\sigma>0$ and an arbitrary non-negative function $R=R(\varepsilon),$ $\varepsilon\in(0,1)$ such that
$R(\varepsilon)\varepsilon|\log \varepsilon|^\gamma\to0$ as $\varepsilon\to0,$ where $\gamma=1$ for $d=2$ and $\gamma=0$ otherwise. In what follows, we will deal with the family of resolvents $(A_\e-z)^{-1}$. We shall always assume that $z\in \mathbb C$ is separated from the spectrum of the original operator family. In particular cases, we will assume that
either $z$ is constrained to the $\varepsilon$-growing compact set
$$
K_\sigma^\varepsilon:=B_{R(\varepsilon)}(0)\setminus\bigl\{z\in \mathbb C |\ \text{dist}(z, \mathbb R)\geq \sigma\},
$$
where $B_{R(\e)}(0)$ denotes the ball of radius $R(\e)$ centered at the origin,
or
$$
z\in K_\sigma:=\bigl\{z\in \mathbb C |\ z\in K \text{ a compact set in } \mathbb C,\ \text{dist}(z, \mathbb R)\geq \sigma\bigr\}.
$$
In particular, for all $\varepsilon$ small enough, one has $K_\sigma\subset K_\sigma^\e.$
After we have established the operator-norm asymptotics of $(A_\e-z)^{-1}$ for $z\in K_\sigma,$ the result is extended by analyticity to a compact set $K_\sigma^{\text{ext}}$ whose distance to the spectrum of the leading order of the asymptotics is bounded below by $\sigma$, and the same statement holds in the case of $K_\sigma^\e.$

Proceeding similarly to, e.g., \cite{Friedlander} in the related area of critical-contrast homogenisation and facilitated by the abstract framework of \cite{Ryzh_spec},   we consider $A_\e$ as operators of transmission problems (see \cite{Schechter} and references therein) relative to the \emph{internal boundary} $\Gamma^\varepsilon:=\cup_{e,v} \Gamma_{ev}^\varepsilon.$
The transmission problem is formulated as, given a function $f\in L^2(Q)$, finding the weak solution $u\in L^2(Q)$ of the boundary value problem
\begin{equation}
	\label{eq:transmissionBVP}
\begin{cases}
-\Delta u(x)-zu(x)=f(x), & x\in Q_V \text{\ or\ } x\in Q_E,\\[0.3em]
u_v(x)=u_e(x),\quad \dfrac{\partial u_v}{\partial n}- \dfrac{\partial u_e}{\partial n}=0& \text{ on } \Gamma_{ev}^\varepsilon,\\[0.4em]
\dfrac{\partial u}{\partial n}  =0 & \text{ on } \partial Q.
\end{cases}
\end{equation}
Here $u_v:=u|_{Q_v}$, $u_e:=u|_{Q_e}$ for all admissible $e$ and $v$ (i.e. when $\Gamma_{ev}^\varepsilon\neq\varnothing$), and $n$ represents the exterior normal on $\partial Q$ and the ``edge-inward'' normal (i.e., directed from $Q_v$ to $Q_e$) on any of contact plates $\Gamma_{ev}^\varepsilon.$
By a classical argument, the solution of the above problem is shown to be equal to $(A_\e-z)^{-1}f$. Note that the  boundary value problem for the Neumann Laplacian on $Q$ is given by the first and third lines in \eqref{eq:transmissionBVP}. While the second line is, strictly speaking, redundant, it proves important in order to view the problem as a transmission one relative to $\Gamma^\e.$

It remains to be seen that the linear operator of the transmission problem \eqref{eq:transmissionBVP} defined via the technique of \cite{Ryzh_spec}, which we briefly recall below, is the same operator $A_\e$; the proof of this fact follows easily by combining   \cite{Ryzh_spec} 
and the main estimate of \cite{Schechter}.

Following the approach of \cite{Ryzh_spec} (cf. \cite{BehrndtLanger2007, BMNW2008} and references therein for alternative approaches), which is based on the ideas of the classical Birman-Kre\u\i n-Vi\v sik theory (see \cite{Birman,Krein,Vishik}), the linear operator of the transmission boundary value problem is introduced as follows. Let $\mathcal H:=L^2(\Gamma^\varepsilon)=\oplus_{e,v} L^2(\Gamma_{ev}^\varepsilon)$, and consider the harmonic lift operators $\Pi_V$ and $\Pi_E$ defined on $\phi\in\mathcal H$ via
\begin{equation*}
	\begin{aligned}
\Pi_V \phi:=u_\phi,\ \ {\rm where}\ \  \begin{cases}
\Delta u_\phi=0,\ \ u_\phi\in L^2(Q_V),&\\
u_\phi|_{\Gamma^\varepsilon} = \phi,
\end{cases}\\[1.0em]
\Pi_E \phi:=u_\phi,\ \ {\rm where}\ \  \begin{cases}
\Delta u_\phi=0,\ \ u_\phi\in L^2(Q_E),&\\
u_\phi|_{\Gamma^\varepsilon} = \phi,
\end{cases}
\end{aligned}
\end{equation*}
subject to Neumann boundary conditions on $\partial Q$. These operators are first defined on $\phi\in C^2_0(\Gamma^\varepsilon)$, in which case the corresponding solutions $u_\phi$ can be seen as classical \cite{Zaremba}. The results of \cite{Brown} allow one to extend both harmonic lifts to bounded (in fact, compact) operators on $\mathcal H$, in which case $u_\phi$ are to be treated as distributional solutions of the respective boundary value problems. The solution operator $\Pi:\mathcal H\mapsto L^{2}(Q)=L^{2}(Q_V)\oplus L^{2}(Q_E)$ is defined as follows:
$$
\Pi \phi:= \Pi_V \phi \oplus \Pi_E \phi.
$$

Consider the self-adjoint operator family $A_0$ (choosing not to reflect the $\varepsilon$-dependence in the notation) to be the Dirichlet decoupling of the operator family $A_\e$, i.e., the operator of the boundary value problem on both $Q_V$ and $Q_E$, where the Dirichlet boundary conditions are imposed on $\Gamma^\varepsilon$ together with Neumann boundary conditions on $\partial Q$. The operator $A_0$ is generated by the same differential expression as $A_\e$. Clearly, one has $A_0=A_0^{V}\oplus A_0^{E}$ relative to the orthogonal decomposition $L^2(Q)=L^2(Q_V)\oplus L^2(Q_E)$; all three operators $A_0, A_0^{V}$ and $A_0^{E}$ are self-adjoint and positive-definite. Moreover, by \cite{Brown,Denzler1} there exists a bounded inverse $A_0^{-1}$. Note that $\dom A_0\cap\ran \Pi=\varnothing,$ see \cite{Ryzh_spec}.

Furthermore, denoting by $\widetilde \Gamma_0^{V}$ and $\tilde \Gamma_0^{E}$ the left inverse of $\Pi_{V}$ and $\Pi_{E},$ respectively, one introduces the trace operator $\widetilde\Gamma_0^{V}$ (respectively, $\widetilde\Gamma_0^{E}$) as the null extension of $\widetilde \Gamma_0^{V}$ (respectively, $\widetilde \Gamma_0^{V}$) to the domain $\dom A_0^{V}\dotplus \ran \Pi_{V}$ (respectively, $\dom A_0^{E}\dotplus \ran \Pi_{E}.$) In the same way we introduce the operator $\tilde \Gamma_0$ and its null extension $\Gamma_0$ to the domain $\dom A_0 \dotplus \ran \Pi$.

The solution operators $S_z^V$, $S_z^E$ of the boundary value problems
\begin{align*}
\begin{cases}
-\Delta u_\phi-z u_\phi=0,\ \ u_\phi\in \dom A_0^V\dotplus \ran\Pi_V,&\\[0.3em]
\Gamma_0^V u_\phi = \phi,&
\end{cases}&\\[0.4em]
\begin{cases}
-\Delta u_\phi-z u_\phi=0,\ \  \ u_\phi\in \dom A_0^E\dotplus \ran\Pi_E,&\\[0.3em]
\Gamma_0^E u_\phi = \phi&
\end{cases}&
\end{align*}
are defined as linear mappings from $\phi$ to $u_\phi$, respectively. These operators are bounded from $L_2(\Gamma^\varepsilon)$ to $L_2(Q_V)$ and $L_2(Q_E)$, respectively, and admit the following representations:
\begin{equation*}
S_z^E=(1-z (A_0^E)^{-1})^{-1}\Pi_E,\quad S_z^V=(1-z (A_0^V)^{-1})^{-1}\Pi_V.
\end{equation*}
The solution operator $S_z$ from $L^2(\Gamma^\varepsilon)$ to $L^2(Q_V)\oplus L^2(Q_E)$ is now defined as $S_z=S_z^V\oplus S_z^E$; it admits the representation $S_z=(1-z (A_0)^{-1})^{-1}\Pi$ and is bounded.

Having introduced orthogonal projections $P_V$ and $P_E$ from $L^2(Q)$ onto $L^2(Q_V)$ and $L^2(Q_E)$, respectively, one has the obvious identities
\begin{equation*}
S_z^V=P_V S_z,\quad S_z^E=P_E S_z,
\qquad
\Pi_V=P_V \Pi,\quad \Pi_E=P_E \Pi.
\end{equation*}

Fix self-adjoint (and, in general, unbounded) operators $\Lambda^E$, $\Lambda^V$ defined on domains $\dom \Lambda^E,$ $\dom \Lambda^V\subset L^2(\Gamma^\varepsilon)$ (in what follows these operators will be chosen as DN maps of Zaremba problems on $Q_E$ and $Q_V$, respectively, and well-defined on $H^1(\Gamma^\varepsilon)$, where $H^1(\Gamma^\varepsilon)$ is the standard Sobolev space pertaining to the internal boundary $\Gamma^\varepsilon$). Still following \cite{Ryzh_spec}, we define the ``second boundary operators'' $\Gamma_1^E$ and $\Gamma_1^V$ to be linear operators on the domains
\begin{equation*}
\dom \Gamma_1^E:=\dom A_0^E\dotplus \Pi_E \dom \Lambda^E, \quad \dom \Gamma_1^V:=\dom A_0^V\dotplus \Pi_V \dom \Lambda^V.
\end{equation*}
The action of $\Gamma_1^{E (V)}$ is set by:
\begin{equation*}
\Gamma_1^E:\ (A_0^E)^{-1}f\dotplus \Pi_E\phi \mapsto \Pi_E^* f+\Lambda^E \phi,
\quad
\Gamma_1^V:\ (A_0^V)^{-1}f\dotplus \Pi_V\phi \mapsto \Pi_V^* f+\Lambda^V \phi
\end{equation*}
for all $f\in L^2(Q_E)$, $\phi\in \dom \Lambda^E$ and $f\in L^2(Q_V)$, $\phi\in \dom \Lambda^V$, respectively.

Alongside $\Gamma_1^{E (V)}$, introduce a self-adjoint operator $\Lambda$ on $\dom \Lambda\subset \mathcal H$ and then the following ``boundary" operator
$
\Gamma_1:
$
$$
\dom \Gamma_1:= \dom A_0^{-1}\dotplus \Pi \dom \Lambda,
$$
$$
\Gamma_1:\ A_0^{-1}f\dotplus \Pi \phi \mapsto \Pi^* f + \Lambda\phi\qquad \forall\,f\in L^2(Q),\,
\phi\in\dom \Lambda.
$$
We remark that the operators $\Gamma_1, \Gamma_1^{E (V)}$ thus defined are assumed to be neither closed nor indeed closable.

In our setup, we make the following concrete choice of the operators $\Lambda^{E(V)}$: in what follows, they are the DN maps pertaining to the components $Q_E$ and $Q_V$, respectively. More precisely, for the problem
\begin{gather*}
\Delta u_\phi=0, \quad u_\phi\in L^2(Q_E),\\[0.1cm]
u_\phi|_{\Gamma^\varepsilon} = \phi,\quad \partial_n u_\phi|_{\partial Q}=0,
\end{gather*}
the operator $\Lambda^E$ maps the boundary values $\phi$ of $u_\phi$ to the negative traces of its normal derivative\footnote{This definition is inspired by \cite{Ryzh_spec}. Note that the operator thus defined is negative the classical $DN$ map of e.g. \cite{Friedlander}.}
\[
\partial_n u_\phi|_{\Gamma^\varepsilon}:=\nabla u\cdot n|_{\Gamma^\varepsilon},
\]
where $n=-n_E$ is as above the ``edge-inward'' normal. This operator is well defined by its sesquilinear form as a self-adjoint operator on $L^2(\Gamma^\varepsilon)$ (see, e.g., \cite{Agranovich,Grubb}), and $\dom \Lambda^E\supset H^1(\Gamma^\varepsilon)$ by \cite{Brown}.

On the vertex part $Q_V$ we consider the problem
\begin{equation*}
\begin{gathered}
\Delta u_\phi=0, \quad u_\phi\in L^2(Q_V),\\[0.2em]
u_\phi|_{\Gamma^\varepsilon} = \phi, \quad \partial_n u_\phi|_{\partial Q}=0,
\end{gathered}
\end{equation*}
and define $\Lambda^V$ as the operator mapping the boundary values $\phi$ of $u_\phi$ to the negative traces of its normal derivative $-\partial_n u_\phi|_{\Gamma^\varepsilon} $, where $n=n_V$ is again the ``edge-inward'' normal. The self-adjointness of $\Lambda^V$ on $\dom\Lambda^V\supset H^1(\Gamma^\varepsilon)$ follows by an unchanged argument.

Finally we introduce the operator $\Lambda$ which on $\phi\in H^1(\Gamma^\varepsilon)$ is the sum $\Lambda\phi = \Lambda^V\phi + \Lambda^E \phi$. It is also a self-adjoint operator on $\dom \Lambda \supset H^1(\Gamma^\varepsilon)$. This can be ascertained either by the argument of \cite{Schechter}, in which case it is defined as the inverse of a compact self-adjoint operator on the orthogonal complement to constants $L^2(\Gamma^\varepsilon)\ominus\{c\pmb 1\}$, extended to $\{c\pmb 1\}$ by zero, or, alternatively, from its definition by a closed sesquilinear form.

The choice of $\Lambda^{E (V)}$ made above allows us to consider $\Gamma_1$  on the domain $\dom A_0 \dotplus \Pi \dom\Lambda$. One then writes \cite{Ryzh_spec} the second Green identity in the following form:
\begin{equation*}
\langle Au,v \rangle_{L^2(Q)} - \langle u, Av \rangle_{L^2(Q)} = \langle \Gamma_1 u, \Gamma_0 v \rangle_{L^2(\Gamma^\varepsilon)} - \langle \Gamma_0 u, \Gamma_1 v \rangle_{L^2(\Gamma^\varepsilon)}
\end{equation*}
for all $u,v\in \dom \Gamma_1=\dom A_0\dotplus \Pi \dom\Lambda$, where the operator $A$ is the null extension (see \cite{Ryzhov_later}) of the operator $A_0$ onto $\dom \Gamma_1$. Thus the triple $(\mathcal H, \Gamma_0, \Gamma_1)$ is closely related to a boundary quasi-triple of \cite{BehrndtLanger2007} (see also \cite{BehrndtRohleder2015}) for the transmission problem considered; cf. \cite{BMNW2008} for an alternative approach.

The calculation of $\Pi^*$ in \cite{Ryzh_spec} shows that $\Pi^*=\Gamma_1 A_0^{-1}$ and therefore $\Gamma_1$ as introduced above acts as follows:
$$
\Gamma_1:\ u=P_E u + P_V u\,\mapsto\,\partial_n P_e u|_{\Gamma^\varepsilon} - \partial_n P_V u|_{\Gamma^\varepsilon},
$$
where $P_E$ and $P_V$ are the orthogonal projections of $L^2(Q)$ onto $L^2(Q_E)$, $L^2(Q_V)$, respectively. Therefore, transmission problem at hand (at least, formally so far) corresponds to the interface (or ``matching") condition $\Gamma_1 u=0$.

\begin{definition}[\cite{Ryzh_spec}]\label{defn:3}
The operator-valued function $M(z)$ defined on the domain $\dom \Lambda$ for
$z\in \rho(A_0)$ (and in particular, for $z\in K_\sigma^\varepsilon$) by the formula
\begin{equation}\label{defn:M-function}
  M(z) \phi = \Gamma_1 S_z \phi = \Gamma_1 (1-z A_0^{-1})^{-1}\Pi \phi
\end{equation}
is referred to as the $M$-function of the problem \eqref{eq:transmissionBVP}.
\end{definition}

The following result of \cite{Ryzh_spec} summarises the properties of the $M-$function which we will need in what follows.

\begin{proposition}[\cite{Ryzh_spec}, Theorem 3.3.]\label{prop:M}

1. One has the following representation:
\begin{equation}\label{eq:M_representation}
  M(z)=\Lambda + z \Pi^* (1-z A_0^{-1})^{-1}\Pi, \quad z\in \rho(A_0).
\end{equation}

2. $M(z)$ is an analytic operator-function with values in the set of closed operators in $L_2(\Gamma^\varepsilon)$ densely defined on the $z$-independent domain $\dom \Lambda$.

3. For $z,\zeta \in\rho (A_0)$ the operator $M(z) - M(\zeta)$ is bounded and
$$
M(z) - M(\zeta) = (z - \zeta)S^*_{\bar z} S_\zeta
$$
In particular, $\Im M(z) = (\Im z) S^{*}_{\bar z} S_{\bar z}$ and $(M(z))^*  = M(\bar z)$.

4. For $u_z \in \ker (A - zI) \cap \{ \dom A_0  \dotplus \Pi \dom \Lambda \}$ the following formula holds:
$$
M(z)\Gamma_0 u_z = \Gamma_1 u_z.
$$
\end{proposition}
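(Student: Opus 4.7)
The plan is to derive all four parts from the definition $M(z)\phi = \Gamma_1 S_z\phi$ together with the decomposition $\dom\Gamma_1 = \dom A_0 \dotplus \Pi\dom\Lambda$ and the action of $\Gamma_1$ given in the preamble.

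For Part 1, I would apply the algebraic identity $(1-zA_0^{-1})^{-1} = I + zA_0^{-1}(1-zA_0^{-1})^{-1}$ to rewrite
$$
S_z\phi = (1-zA_0^{-1})^{-1}\Pi\phi = \Pi\phi + A_0^{-1}\bigl[z(1-zA_0^{-1})^{-1}\Pi\phi\bigr].
$$
This exhibits $S_z\phi$ as a sum of an element of $\ran\Pi$ (namely $\Pi\phi$) and an element of $\dom A_0$ (namely $A_0^{-1}f_z$ with $f_z := z(1-zA_0^{-1})^{-1}\Pi\phi\in L^2(Q)$). Applying the definition of $\Gamma_1$ on $\dom A_0\dotplus\Pi\dom\Lambda$ gives $\Gamma_1 S_z\phi = \Pi^* f_z + \Lambda\phi$, which is exactly the claimed representation. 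Part 2 is then immediate: the bounded term $z\Pi^*(1-zA_0^{-1})^{-1}\Pi$ is analytic in $z\in\rho(A_0)$ by the analyticity of the resolvent, its addition to the closed self-adjoint $\Lambda$ keeps the sum closed on the $z$-independent domain $\dom\Lambda$, and density of $\dom\Lambda$ follows from self-adjointness of $\Lambda$.

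For Part 3, I would verify the first resolvent-type identity
$$
z(1-zA_0^{-1})^{-1} - \zeta(1-\zeta A_0^{-1})^{-1} = (z-\zeta)(1-zA_0^{-1})^{-1}(1-\zeta A_0^{-1})^{-1}
$$
by multiplying both sides by $(1-zA_0^{-1})(1-\zeta A_0^{-1})$ and expanding. Sandwiching with $\Pi^*$ on the left and $\Pi$ on the right and using self-adjointness of $A_0$ to identify $\Pi^*(1-zA_0^{-1})^{-1} = \bigl[(1-\bar z A_0^{-1})^{-1}\Pi\bigr]^* = (S_{\bar z})^*$ turns the right-hand side into $(z-\zeta)S_{\bar z}^* S_\zeta$, yielding the advertised identity. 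The two corollaries $\Im M(z) = (\Im z)S_{\bar z}^* S_{\bar z}$ and $M(z)^*=M(\bar z)$ follow by setting $\zeta=\bar z$ and by taking adjoints in the representation from Part 1 (using $\Lambda^*=\Lambda$).

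For Part 4, given $u_z\in\ker(A-zI)\cap\{\dom A_0\dotplus\Pi\dom\Lambda\}$, I would write uniquely $u_z = A_0^{-1}g + \Pi\phi$ with $g\in L^2(Q)$, $\phi\in\dom\Lambda$, so that $\Gamma_0 u_z = \phi$ and $Au_z = g$ from the definition of the null extension. The equation $Au_z = zu_z$ then reads $g = z(A_0^{-1}g + \Pi\phi)$, i.e.\ $(1-zA_0^{-1})g = z\Pi\phi$, forcing $g = z(1-zA_0^{-1})^{-1}\Pi\phi$. Substituting back, $\Gamma_1 u_z = \Pi^* g + \Lambda\phi = \bigl[z\Pi^*(1-zA_0^{-1})^{-1}\Pi + \Lambda\bigr]\phi = M(z)\Gamma_0 u_z$.

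The only genuinely delicate point is ensuring that the decomposition $\dom\Gamma_1 = \dom A_0\dotplus\Pi\dom\Lambda$ is a direct sum with each summand invariant under the expected action of $\Gamma_1$, so that all the manipulations above are unambiguous; this rests on the trivial intersection $\dom A_0\cap\ran\Pi=\{0\}$ already recorded in the setup, together with boundedness of the harmonic lift $\Pi$. Once this is in hand, the rest is essentially resolvent calculus, with no novel estimates required.
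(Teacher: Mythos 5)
Your proposal is correct, and it reconstructs exactly the standard argument behind this result: the paper itself does not prove Proposition \ref{prop:M} but imports it from \cite{Ryzh_spec} (Theorem 3.3), whose derivation proceeds precisely as you do — splitting $S_z\phi=A_0^{-1}f_z\dotplus\Pi\phi$ via the resolvent identity, applying the defining action of $\Gamma_1$, and then obtaining parts 2--4 by resolvent calculus, self-adjointness of $\Lambda$ and $A_0$, and the null-extension definition of $A$, all resting on the trivial intersection $\dom A_0\cap\ran\Pi=\{0\}$ as you note. The only implicit point worth keeping in mind is that in part 4 the inversion of $1-zA_0^{-1}$ requires $z\in\rho(A_0)$, which is tacit in the statement since $M(z)$ is only defined there.
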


Alongside $M(z),$ we define $M_V(z)$ and $M_E(z),$ which pertain to the vertex $Q_V$ and edge $Q_E$ parts of the domain $Q,$ respectively, by the formulae
\begin{equation}\label{defn:M-stiffsoft}
\begin{gathered}
M_V(z) \phi = \Gamma^V_1 S_z^V \phi = \Gamma_1^V\bigl(1-z (A_0^V)^{-1}\bigr)^{-1}\Pi_V \phi,
\\ M_E(z) \phi = \Gamma^E_1 S_z^E \phi = \Gamma_1^E \bigl(1-z (A_0^E)^{-1}\bigr)^{-1}\Pi_E \phi.
\end{gathered}
\end{equation}
As before, in the notation we suppress the dependence on the parameter $\varepsilon$ for brevity.

The value of the fact that for $\phi\in H^1(\Gamma^\varepsilon)$ one has $M(z)\phi=M_E(z)\phi+M_V(z)\phi$ is clear: in contrast to $A_\e,$ which cannot be additively decomposed into ``independent'' terms pertaining to the vertex and edge parts of the medium $Q$ owing to the transmission interface conditions on $\Gamma^\varepsilon,$ the $M$-function is additive (see, e.g., \cite{ChEK, CherKisSilva}, where this property was observed and exploited in the related settings of homogenisation and scattering, respectively). In what follows, we will observe that the resolvent $(A_\e-z)^{-1}$ can be expressed in terms of $M(z)$ via a version of the celebrated Kre\u\i n formula, thus reducing the asymptotic analysis of the resolvent to that of the corresponding $M$-function (see, e.g., \cite{AGW2014,BehrndtLanger2007} for alternative approaches to derivation of the Kre\u\i n formula in our setting).

Alongside the transmission problem \eqref{eq:transmissionBVP}, the boundary conditions of which can be now (so far, formally) represented as $u\in \dom A_0\dotplus \Pi L^2(\Gamma^\varepsilon)$, $\Gamma_1 u=0$, in what follows we will require a wider class of problems of this type. This class is formally given by the transmission conditions
$$
u\in \dom A\equiv \dom A_0\dotplus \Pi L^2(\Gamma^\varepsilon),\quad \beta_0 \Gamma_0 u+\beta_1 \Gamma_1 u =0,
$$
where $\beta_1$ is a bounded operator on $L^2(\Gamma^\varepsilon)$ and $\beta_0$ is a linear operator defined on the domain $\dom \beta_0\supset \dom \Lambda$.

In general, the operator $\beta_0\Gamma_0+\beta_1 \Gamma_1$ is not defined on the domain $\dom A$. This problem is being taken care of by the following assumption, which will be satisfied throughout:
$$
\beta_0+\beta_1 \Lambda \text{ defined on $\dom\Lambda$ is closable in } \mathcal H.
$$
We remark that by Proposition \ref{prop:M} the operators $\beta_0+\beta_1 M(z)$ are then closable for all $z\in \rho(A_0)$, and the domains of their closures coincide with $\dom \overline{\beta_0+\beta_1 \Lambda}$.

For any $f\in H,$ $\phi\in \dom \Lambda$, the equality
$$
(\beta_0\Gamma_0+\beta_1\Gamma_1)(A_0^{-1}f+\Pi \phi)=\beta_1 \Pi^* f + (\beta_0+\beta_1\Lambda)\phi
$$
shows that the operator $\beta_0\Gamma_0+\beta_1\Gamma_1$ is correctly defined on $A_0^{-1}H\dotplus \Pi \dom \Lambda\subset \dom A$. Denoting $\mathcal B:=\overline{\beta_0+\beta_1\Lambda}$ with the domain $\dom\mathcal B\supset \dom \Lambda$, one checks that $H_{\mathcal B}:=A_0^{-1}H\dotplus \Pi\dom\mathcal B$ is a Hilbert space with respect to the norm
$$
\|u\|^2_{\mathcal B}:=\|f\|^2_H+\|\phi\|^2_{\mathcal H}+\|\mathcal B\phi\|^2_{\mathcal H}, \qquad
u=A_0^{-1}f+\Pi\phi.
$$
It is then proved \cite[Lemma 4.1]{Ryzh_spec} that $\beta_0\Gamma_0+\beta_1\Gamma_1$ extends to a bounded operator from $H_{\mathcal B}$ to $\mathcal H.$ For the sake of convenience, same notation $\beta_0\Gamma_0+\beta_1\Gamma_1$ is preserved for this extension.

We will make use of the following version of the celebrated Kre\u\i n formula.
\begin{proposition}[\cite{Ryzh_spec},Theorem 5.1]\label{prop:operator}
Let $z\in\rho(A_0)$ be such that the operator $\overline{\beta_0+\beta_1 M(z)}$ defined on $\dom \mathcal B$ is boundedly invertible. Then
\begin{equation}\label{eq:Krein_general}
R_{\beta_0,\beta_1}(z):=(A_0-z)^{-1}+S_z Q_{\beta_0,\beta_1}(z)S_{\bar z}^*, \text{ where } Q_{\beta_0,\beta_1}:=-(\overline{\beta_0+\beta_1 M(z)})^{-1}\beta_1,
\end{equation}
is the resolvent of a closed densely defined operator $A_{\beta_0,\beta_1}$ with the domain
$$
\dom A_{\beta_0,\beta_1}=\{u\in H_{\mathcal B}| (\beta_0\Gamma_0+\beta_1\Gamma_1) u=0\}=\ker(\beta_0\Gamma_0+\beta_1\Gamma_1).
$$
\end{proposition}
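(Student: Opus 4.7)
The plan is direct verification of the Kre\u\i n-type formula. Fix $f\in L^2(Q)$ and set
$$ \phi := -\bigl(\overline{\beta_0+\beta_1 M(z)}\bigr)^{-1}\beta_1 S_{\bar z}^{*} f, \qquad u := (A_0-z)^{-1} f + S_z\phi, $$
so that $u = R_{\beta_0,\beta_1}(z) f$. Four things need to be verified in order: (a) $u\in H_{\mathcal B}$, so that the extended boundary form $\beta_0\Gamma_0+\beta_1\Gamma_1$ of \cite[Lemma~4.1]{Ryzh_spec} applies to $u$; (b) $(\beta_0\Gamma_0+\beta_1\Gamma_1) u = 0$; (c) $(A-z) u = f$, where $A$ is the null-extension of $A_0$ onto $\dom\Gamma_1$; (d) $u=0$ whenever $f=0$. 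Items (a)--(c) identify $R_{\beta_0,\beta_1}(z)$ as a right inverse of $A-z$ whose range lies in $\ker(\beta_0\Gamma_0+\beta_1\Gamma_1)\cap H_{\mathcal B}$, while (d) makes it a two-sided inverse; together these show it is the resolvent of a closed operator with the stated domain.

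Item (a) is immediate: $(A_0-z)^{-1}f\in\dom A_0\subset H_{\mathcal B}$, and the hypothesis ensures $\phi\in\dom\mathcal B$, whence $S_z\phi\in\Pi\dom\mathcal B\subset H_{\mathcal B}$. For (b), the key identity is
$$\Gamma_1 (A_0-z)^{-1} = S_{\bar z}^{*},$$
which I would derive from $\Pi^{*}=\Gamma_1 A_0^{-1}$ together with $A_0(A_0-z)^{-1}=I+z(A_0-z)^{-1}=(1-zA_0^{-1})^{-1}$ and the representation $S_{\bar z}^{*}=\Pi^{*}(1-zA_0^{-1})^{-1}$ coming from \eqref{eq:Sz}. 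Combined with $\Gamma_0 S_z\phi=\phi$, $\Gamma_0(A_0-z)^{-1}f=0$, and $\Gamma_1 S_z\phi = M(z)\phi$ (definition of $M$), a short computation gives
$$(\beta_0\Gamma_0+\beta_1\Gamma_1) u = \beta_1 S_{\bar z}^{*}f + \bigl(\overline{\beta_0+\beta_1 M(z)}\bigr)\phi = 0$$
by the choice of $\phi$. Item (c) follows from $(A-z)(A_0-z)^{-1}f=f$ (standard) and $(A-z)S_z\phi=0$, the latter because $S_z\phi=\Pi\phi+zA_0^{-1}S_z\phi$ gives $A S_z\phi = z S_z\phi$ once one uses $A\Pi\phi=0$, the defining property of the null-extension.

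For (d), suppose $v\in H_{\mathcal B}$ satisfies $(A-z)v=0$ and $(\beta_0\Gamma_0+\beta_1\Gamma_1)v=0$. Write $v=A_0^{-1}g+\Pi\psi$ with $\psi\in\dom\mathcal B$; invoking part~4 of Proposition~\ref{prop:M}, $v$ must in fact coincide with $S_z\psi$, so $\Gamma_0 v=\psi$ and $\Gamma_1 v=M(z)\psi$. The boundary condition then reads $\overline{\beta_0+\beta_1 M(z)}\psi=0$, and bounded invertibility forces $\psi=0$, hence $v=0$. Consequently $R_{\beta_0,\beta_1}(z)$ is bounded, everywhere defined on $L^2(Q)$, has trivial kernel, and has range exactly $\ker(\beta_0\Gamma_0+\beta_1\Gamma_1)\cap H_{\mathcal B}$ by (a)--(c). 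Thus $A_{\beta_0,\beta_1}:=R_{\beta_0,\beta_1}(z)^{-1}+z$ is a closed operator with the stated domain; density of this domain is inherited from the analogous analysis applied to the adjoint problem with $\beta_j$ replaced by $\beta_j^{*}$.

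The principal technical obstacle is not the algebra---which is essentially a symbolic rewriting of the Green identity \eqref{eq:quasi_triple_property}---but rather the justification that the boundary form $\beta_0\Gamma_0+\beta_1\Gamma_1$ genuinely makes sense on all of $H_{\mathcal B}$, and that $\beta_0+\beta_1 M(z)$ extends from $\dom\Lambda$ to $\dom\mathcal B$ with a closed extension whose domain is independent of $z$. Both points are supplied by \cite[Lemma~4.1]{Ryzh_spec} and by part~3 of Proposition~\ref{prop:M} (which makes $M(z)-\Lambda$ bounded, so $\overline{\beta_0+\beta_1\Lambda}$ and $\overline{\beta_0+\beta_1 M(z)}$ share the common domain $\dom\mathcal B$). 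Once these extensions are in place the verification above is rigorous.
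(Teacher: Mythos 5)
The paper itself offers no proof of this statement: it is imported verbatim from Ryzhov \cite[Theorem 5.1]{Ryzh_spec}, so there is no in-paper argument to compare against. Your direct verification is essentially the standard proof of that theorem, and its core is sound: the identity $\Gamma_1(A_0-z)^{-1}=S_{\bar z}^*$, the relations $\Gamma_0 S_z\phi=\phi$, $\Gamma_0(A_0-z)^{-1}f=0$, $(A-z)S_z\phi=0$, and the uniqueness step (d), which together with (a)--(c) (not (a)--(c) alone, as you write) identifies the range of $R_{\beta_0,\beta_1}(z)$ with $\ker(\beta_0\Gamma_0+\beta_1\Gamma_1)$. Two cosmetic points: the boundedness of $M(z)-\Lambda$ comes from part 1 of Proposition \ref{prop:M} (the representation \eqref{eq:M_representation}), not part 3; and in (d) you do not need part 4 of Proposition \ref{prop:M} at all, since $(A-z)v=0$ with $v=A_0^{-1}g+\Pi\psi$ gives $g=zv$, hence $v=(1-zA_0^{-1})^{-1}\Pi\psi=S_z\psi$ directly --- which also sidesteps the fact that part 4 is stated only for $\psi\in\dom\Lambda$ rather than $\dom\mathcal B$.

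The one genuine soft spot is the density of $\dom A_{\beta_0,\beta_1}$. You defer it to ``the analogous analysis applied to the adjoint problem with $\beta_j$ replaced by $\beta_j^*$'', but
$R_{\beta_0,\beta_1}(z)^*=(A_0-\bar z)^{-1}+S_{\bar z}\,Q^*\,S_z^*$ with $Q^*=-\beta_1^*\bigl[\bigl(\overline{\beta_0+\beta_1 M(z)}\bigr)^*\bigr]^{-1}$, which is \emph{not} of the form $-\bigl(\overline{\beta_0^*+\beta_1^* M(\bar z)}\bigr)^{-1}\beta_1^*$: the factor $\beta_1^*$ sits on the wrong side, and $(\beta_0+\beta_1 M(z))^*$ is related to $\beta_0^*+M(\bar z)\beta_1^*$ rather than to $\beta_0^*+\beta_1^*M(\bar z)$, so the claimed analogy does not apply as stated. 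The gap is easily closed without any adjoint boundary problem: if $R_{\beta_0,\beta_1}(z)^*h=0$, write this as $(A_0-\bar z)^{-1}h+S_{\bar z}\chi=0$ with $\chi:=Q^*S_z^*h\in\mathcal H$; since $S_{\bar z}\chi=\Pi\chi+\bar z A_0^{-1}S_{\bar z}\chi$ and $\dom A_0\cap\ran\Pi=\{0\}$, applying $\Gamma_0$ yields $\chi=0$, hence $(A_0-\bar z)^{-1}h=0$ and $h=0$; thus $\ran R_{\beta_0,\beta_1}(z)=\dom A_{\beta_0,\beta_1}$ is dense. With this replacement, and with your (correct) appeal to \cite[Lemma 4.1]{Ryzh_spec} to give meaning to $\beta_0\Gamma_0+\beta_1\Gamma_1$ on all of $H_{\mathcal B}$ and to pass from $\dom\Lambda$ to $\dom\mathcal B$ by continuity, the verification is complete and coincides in substance with the cited proof.
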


In particular, the (self-adjoint) operator of the transmission problem \eqref{eq:transmissionBVP}, which corresponds to the choice $\beta_0=0, \beta_1=I$, admits the following characterisation in terms of its resolvent:
\begin{equation}\label{eq:Krein_transmission}
R_{0,I}(z)=(A_0-z)^{-1}-S_z M^{-1}(z) S_{\bar z}^*.
\end{equation}
In this case, one clearly has $H_{\mathcal B}=A_0^{-1}H\dotplus \Pi \dom \Lambda$ and $\dom A_{0,I}=\{u\in H_{\mathcal B}| \Gamma_1 u=0\},$ which, together with the discussion at the beginning of this section, yields $A_{0,I}=A_\e$.

We remark that the operators $\beta_0$ and $\beta_1$ above can be assumed $z$-dependent, as this change does not impact the corresponding proofs of \cite{Ryzh_spec}. In this case however, the corresponding operator-function $R_{\beta_0,\beta_1}(z)$ is shown to be the resolvent of a $z$-dependent operator family. Within the self-adjoint setup of the present paper, $R_{\beta_0,\beta_1}(z)$ is guaranteed to represent a generalised resolvent in the sense of \cite{Naimark1940,Naimark1943,Strauss}.

\section{Auxiliary estimates}\label{sec:aux}

In this section we collect a number of auxiliary statements required in our proof of the main result. 

We start with the analysis of the operators $\Pi_V$ and $S_z^V$ introduced in Section 2. First we note that each of these operators admits a decomposition into an orthogonal sum over $N$ vertex domains $\{Q_v\}$ of $Q$. 
It therefore suffices to consider a single vertex domain $Q_v$ (we recall for readers' convenience that the volume of this domain is assumed to be decaying with $\e\to 0$). Its boundary $\partial Q_v$ contains a disjoint set of straight segments belonging to the internal boundary $\Gamma^\varepsilon,$ which are, in line with what has been said above, denoted as $\Gamma_{ev}^\varepsilon$; the union of the latter is $\Gamma_v^{\e}$.

The decoupled operator $A_0$ has $L^2(Q_v)$ as its invariant subspace. We will denote by $A_0^{(v)}$ its self-adjoint restriction,
$A_0^{(v)}:=A_0|_{L^2(Q_v)}$. By construction, the operator $A_0^{(v)}$ is the Laplacian with the so-called Zaremba, or mixed Neumann-Dirichlet, boundary condition \cite{Zaremba,Polterovich}. More precisely, it is subject to the Dirichlet boundary condition on $\Gamma^{\e}_v$ and to Neumann boundary condition on its complement $\widetilde{\Gamma}_v^\e$. Clearly this operator is boundedly invertible; moreover, the following statement holds.

\begin{proposition}[see \cite{Denzler1,Denzler2}]\label{prop:Denzler}
There exists a constant $C>0$ such that for all $\varepsilon$ one has
$$
\bigl\|(A_0^{(v)})^{-1}\bigr\|\leq C|Q_v|\left\{\begin{array}{ll}
	|\log \e|,&\ d=2,\\[0.4em]
	\varepsilon^{2-d},&\ d\ge3\end{array}\right.\asymp\e|\log \e|^\gamma,
$$
where, as before, $\gamma=1$ for $d=2$ and $\gamma=0$ otherwise.
\end{proposition}

\begin{remark}
The above proposition holds under more general conditions than those we impose. Namely, the domain $Q_v$ is only required to be Lipschitz and no conditions whatsoever are imposed on the geometry of the set $\Gamma_v^\e$.
\end{remark}

Next, we turn our attention to the solution operator $S_z^{(v)}:=S_z^V|_{\Gamma^{\e}_v}$ and the corresponding harmonic lift $\Pi^{(v)}:=\Pi_V|_{\Gamma^{\e}_v}$. The two are clearly related by the formula
$$
S_z^{(v)}=\bigl(1-z(A_0^{(v)})^{-1}\bigr)^{-1}\Pi^{(v)}.
$$
In order to bound the norm of $\Pi^{(v)}$, we can follow, e.g., the following approach. First, consider the corresponding Zaremba problem on $Q_{v}^0$. We proceed by relating the norm of the corresponding Poisson operator to the least Steklov eigenvalue of the bi-Laplacian, following the blueprint of \cite{Kuttler}, based in turn on Fichera's duality principle, see \cite{Fichera}. Since the boundary of $Q_{v}^0$ is non-smooth, in doing so we follow the generalisations developed in \cite{Gazzola_non_smooth,Auchmuty}, with obvious modifications required when passing from the Dirichlet to Zaremba setup. The estimate for the said Steklov eigenvalue is then taken from the norm of the compact embedding of $H^2(Q_{v}^0)$ to the traces of normal derivatives on the contact plates, see e.g. \cite{Gazzola}. Rescaling back to $Q_v$, we obtain the following auxiliary result.



\begin{lemma}\label{lemma:Pi_estimate}
There exists $C>0$ such that $\|\Pi^{(v)}\|\leq C$
for all $\varepsilon.$
\end{lemma}

By Proposition \ref{prop:Denzler}, the above lemma
yields the following estimate for the solution operator $S_z^{(v)}.$
\begin{lemma}
For $\varepsilon\in(0,1),$ uniformly in $z\in K_\sigma^\varepsilon$ (and, in particular, for $z\in K_\sigma$) one has
$$
S_z^{(v)}=
	\bigl(1+O\left(|z|\e|\log \e|^\gamma\right)\bigr)\Pi^{(v)}
	=\Pi^{(v)} + O(|z|\e|\log \e|^\gamma),
$$
where the error bounds are understood in the uniform operator norm topology.
\end{lemma}

Our next step is the analysis of the ``part'' of the DN map $\Lambda^V$ pertaining to the vertex domain $Q_v$. We will denote by $\Lambda_{v}^V$ its self-adjoint restriction
$\Lambda^V|_{L^2(\Gamma_v^\e)}=:\Lambda_{v}^V.$ 

First, we note that the spectrum of $\Lambda_v^V$ (which can be termed as the Steklov spectrum of the sloshing problem pertaining to $A_0^{(v)}$, see \cite{Levitin_et_al2018}) is discrete and accumulates to negative infinity. The point $\lambda_1=0$ is the least (by absolute value) Steklov eigenvalue with $\psi_v=|\Gamma^{\e}_v|^{-1/2} \pmb 1|_{\Gamma^{\e}_v}$ being the corresponding eigenvector. For the second eigenvalue $\lambda_2$ one has the following estimate, see, e.g., \cite{Agranovich} and references therein.
\begin{lemma}\label{lemma:Lambda_estimate}
	There exists $C>0$ such that
$
|\lambda_2|\ge C
	\e^{-1}.
$
\end{lemma}

Introduce the $N$-dimensional orthogonal projection
$$
\P:=\sum_{v}\langle \cdot, \psi_v\rangle \psi_v,
$$
define $\Port:=1-\P$
and consider the operator $\Port M(z) \Port,$ which is well defined since $\P \dom\Lambda \subset \dom \Lambda.$ By a straightforward estimate for sesquilinear forms, see \cite[Section 3.2]{ChEK}, and taking into account \eqref{eq:M_representation} applied to $M_V(z)$ and combined with Proposition \ref{prop:Denzler} as well as Lemmata \ref{lemma:Pi_estimate}, \ref{lemma:Lambda_estimate}, one has the following statement.
\begin{lemma}\label{lemma:Mort}
	There exists $C>0$ such that for all $\varepsilon\in(0,1)$ one has
$$
\bigl\|(\Port M(z) \Port)^{-1}\bigr\| \le C \e,\qquad  z\in K_\sigma^\varepsilon,
$$
where the operator $\Port M(z) \Port$ is considered as a linear (unbounded) operator in $\Port\mathcal H$.
\end{lemma}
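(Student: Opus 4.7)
The plan is to view $\Port M(z)\Port$ as a small bounded perturbation of the strongly coercive operator $\Port\Lambda\Port$ on $\Port\mathcal H$. By Proposition \ref{prop:M}(1), I decompose
$$
M(z) = \Lambda + K(z), \qquad K(z) := z\Pi^*(1-zA_0^{-1})^{-1}\Pi,
$$
so that, on $\Port\dom\Lambda$, one has $\Port M(z)\Port = \Port\Lambda\Port + \Port K(z)\Port$.

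The first main step is to show that $\Port\Lambda\Port$ is a self-adjoint operator on $\Port\mathcal H$ satisfying $-\Port\Lambda\Port \ge (C/\e)I$ in the form sense. By construction, $\Lambda = \Lambda^V + \Lambda^E$ with both Dirichlet-to-Neumann maps non-positive (cf.\ the footnote after \eqref{eq:DN_stiff}, which identifies $\Lambda^V$ and $\Lambda^E$ with the negatives of the classical DN maps). Since $\P$ was defined precisely as the spectral projection onto $\ker\Lambda^V = \oplus_v\mathrm{span}\{\psi_v\}$, Lemma \ref{lemma:Lambda_estimate} gives $-\Lambda^V \ge (C/\e)\Port$, and the non-positivity of $\Lambda^E$ then yields $-\Port\Lambda\Port \ge (C/\e)I$ on $\Port\mathcal H$. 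Consequently $\Port\Lambda\Port$ is boundedly invertible on $\Port\mathcal H$ with $\|(\Port\Lambda\Port)^{-1}\| \le C\e$.

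The second main step is a uniform bound $\|K(z)\| \le C$ for all $z\in K_\sigma$ as $\e\to 0$. The harmonic lifts satisfy $\|\Pi_V\| \le C$ vertex by vertex thanks to Lemma \ref{lemma:Pi_estimate}, while $\|\Pi_E\| \le C$ follows by a direct separation-of-variables estimate on each $\e$-thin rectangle $Q_e$; thus $\|\Pi\| \le C$. Since $A_0$ is positive and boundedly invertible, and $\mathrm{dist}(z,\sigma(A_0)) \ge \sigma$ for $z\in K_\sigma$, one has
$$
\|(1-zA_0^{-1})^{-1}\| = \|I + z(A_0-z)^{-1}\| \le 1 + |z|/\sigma \le C,
$$
giving $\|K(z)\| \le |z|\,\|\Pi\|^2\,\|(1-zA_0^{-1})^{-1}\| \le C_0$.

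To conclude, I factor
$$
\Port M(z)\Port = \Port\Lambda\Port\bigl[I + (\Port\Lambda\Port)^{-1}\Port K(z)\Port\bigr].
$$
The perturbation term has norm at most $C\e\cdot C_0$; for $\e$ small enough this is less than $1/2$, so a Neumann series inverts the bracket with norm at most $2$, and the claimed bound $\|(\Port M(z)\Port)^{-1}\| \le C\e$ follows. The subtle point lies in the coercivity step: it is essential that $\P$ kills exactly $\ker\Lambda^V$, which transfers the $\e^{-1}$ spectral gap of Lemma \ref{lemma:Lambda_estimate} to $-\Port\Lambda^V\Port$, while the non-positivity of $\Lambda^E$ ensures this gap is not degraded when passing from $\Lambda^V$ to $\Lambda = \Lambda^V + \Lambda^E$.
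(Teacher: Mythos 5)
Your proposal is correct and follows essentially the same route as the paper: the $\varepsilon^{-1}$ spectral gap of $\Lambda^V$ on the orthogonal complement of the constants (Lemma \ref{lemma:Lambda_estimate}), the sign of $\Lambda^E$, and uniform bounds on the harmonic lifts and on $(1-zA_0^{-1})^{-1}$ for $z\in K_\sigma$, assembled into an invertibility estimate for $\Port M(z)\Port$. The only (harmless) differences are organisational: you perturb off the $z$-independent operator $\Port\Lambda\Port$ and invert by a Neumann series rather than estimating the sesquilinear form of $M(z)=M_V(z)+M_E(z)$ directly, and you replace the use of Proposition \ref{prop:Denzler} by the cruder bound $\|(1-zA_0^{-1})^{-1}\|\le 1+|z|/\sigma$, which suffices here.
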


We conclude this section by noting that, alternatively, one can derive the bound on the Poisson operator $\Pi^{(v)}$ claimed in Lemma \ref{lemma:Pi_estimate} by employing the scaling property of the Dirichlet-to-Neumann map $\Lambda^V_v,$ similar to the argument of \cite{Agranovich} referenced above, combined with a standard estimate on the solutions to the classical Neumann problem.

\section{Norm-resolvent asymptotics}

We will make use of the Kre\u\i n formula \eqref{eq:Krein_transmission} to obtain a norm-resolvent asymptotics of the family $A_\e$. In doing so, we will compute the asymptotics of $M^{-1}(z)$ based on a Schur-Frobenius type inversion formula, having first rewritten $M(z)$ as a $2\times 2$ operator matrix relative to the orthogonal decomposition of the Hilbert space $\mathcal H=\P \mathcal{H}\oplus\Port \mathcal{H}$. In the study of operator matrices, we rely upon the material of \cite{Tretter}, see also references therein.


The operator $M(z)$ admits the block matrix representation
$$
M(z)=\begin{pmatrix}
       \mathbb A & \mathbb B \\
       \mathbb E & \mathbb D
     \end{pmatrix} \text{ with } \mathbb A, \mathbb B, \mathbb E \text{ bounded.}
$$
For the inversion of $M(z)$ we then use the Schur-Frobenius inversion formula \cite[Theorem 2.3.3]{Tretter}
\begin{equation}\label{eq:SchurF}
\overline{\begin{pmatrix}
       \mathbb A & \mathbb B \\
       \mathbb E & \mathbb D
     \end{pmatrix}}^{-1}=
\begin{pmatrix}
  \mathbb  A^{-1}+\overline{\mathbb  A^{-1}\mathbb B}\overline{\mathcal{S}}^{-1}\mathbb E\mathbb A^{-1} & -\overline{\mathbb A^{-1}\mathbb B}\overline{\mathcal{S}}^{-1} \\
  -\overline{\mathcal{S}}^{-1}\mathbb E\mathbb A^{-1} & \overline{\mathcal{S}}^{-1}
\end{pmatrix}
\text{ with } \mathcal{S}:=\mathbb D-\mathbb E \mathbb A^{-1} \mathbb B.
\end{equation}

Note that by Proposition \ref{prop:M}, one has $\Im M(z) = (\Im z) S^{*}_{\bar z} S_{\bar z}.$
 Moreover, since $S_z=(1-zA_0^{-1})^{-1}\Pi,$ one has
 \[
 S^{*}_{\bar z} S_{\bar z}=\Pi^*(1-zA_0^{-1})^{-1}(1-{\bar z}A_0^{-1})^{-1}\Pi,
 \]
 and therefore, for some  constants $c_1, c_2>0,$
 \[
\langle S^{*}_{\bar z} S_{\bar z}\P\phi, \P\phi\rangle_{\mathcal H}=\bigl\Vert(1-{\bar z}A_0^{-1})^{-1}\Pi\P\phi\bigr\Vert^2\ge c_1\Vert\Pi\P\phi\Vert^2\ge c_1\Vert\Pi_V\P\phi\Vert^2\ge c_2\Vert\P\phi\Vert_{\mathcal H}^2
 \]
 for all $\phi\in{\mathcal H}, z\in K_\sigma^\varepsilon,$
 where we have used the fact that the operator $A_0$ is bounded below by a positive constant. It follows that  ${\mathbb A}^{-1}=(\P M(z)\P)^{-1}$ is boundedly invertible.

Proceeding exactly as in \cite{ChEK} based on the estimate provided by Lemma \ref{lemma:Mort} which now reads
$$
\bigl\|\mathbb D^{-1}\bigr\|\leq C\e,
$$
we use $\mathcal{S}^{-1}=(I-\mathbb D^{-1}\mathbb E\mathbb A^{-1}\mathbb B)^{-1} \mathbb D^{-1}$ to obtain $\mathcal S^{-1}=O(\e)$.

Returning to \eqref{eq:SchurF}, one gets
\begin{equation}\label{eq:estimatefortheorem}
M(z)^{-1}=\begin{pmatrix}
       \mathbb A &\mathbb  B \\
       \mathbb E &\mathbb  D
     \end{pmatrix}^{-1}=
\begin{pmatrix}
 \mathbb  A^{-1}& 0\\
  0 & 0
\end{pmatrix}
+O(\e)
\end{equation}
with a uniform estimate for the remainder term. Comparing our result with \eqref{eq:Krein_general} of Proposition \ref{prop:operator} with $\beta_0:=\Port$ and $\beta_1:=\P$, one arrives at the following

\begin{theorem}\label{thm:NRA}
There exists $C>0$ such that for all $\varepsilon\in(0,1)$ and $z\in K_\sigma^\varepsilon$ (in partucular, for all
$z\in K_\sigma$)
one has the estimate
$$
\bigl\|(A_\e-z)^{-1}-(A_{\beta_0,\beta_1}-z)^{-1}\bigr\|\leq C\e
$$
for a universal constant $C$ and $\beta_0=\Port$, $\beta_1=\P$, where the operator $A_{\beta_0,\beta_1}$ is defined in Proposition \ref{prop:operator}.
\end{theorem}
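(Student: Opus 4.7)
The plan is to apply the Kre\u\i n resolvent formulae \eqref{eq:Krein_transmission} and \eqref{eq:Krein_general} to the two operators appearing in the theorem and to show that the leading block of $M(z)^{-1}$ identified in \eqref{eq:estimatefortheorem} is exactly cancelled by the Kre\u\i n $Q$-term for $A_{\beta_0,\beta_1}$, so that subtracting the two resolvents leaves only an $O(\e)$ remainder sandwiched between uniformly bounded solution operators.

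First I would explicitly compute $Q_{\beta_0,\beta_1}(z)=-\overline{(\Port+\P M(z))}^{-1}\P$. Writing everything as a block operator on $\mathcal H=\P\mathcal H\oplus\Port\mathcal H$ and using that $\P$ and $\Port$ are complementary orthogonal projections, one obtains
\[
\Port+\P M(z)=\begin{pmatrix}\mathbb A&\mathbb B\\0&I\end{pmatrix},
\]
which is block upper triangular with bounded inverse (thanks to the bounded invertibility of $\mathbb A=\P M(z)\P$, established in the paper just above the theorem via positivity of $\Im M(z)$ on $\P\mathcal H$) given by
\[
\begin{pmatrix}\mathbb A^{-1}&-\mathbb A^{-1}\mathbb B\\0&I\end{pmatrix}.
\]
Multiplying by $-\P$ on the right yields
\[
Q_{\beta_0,\beta_1}(z)=-\begin{pmatrix}\mathbb A^{-1}&0\\0&0\end{pmatrix},
\]
which is precisely the negative of the leading block of $M(z)^{-1}$ in \eqref{eq:estimatefortheorem}.

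Subtracting \eqref{eq:Krein_general} from \eqref{eq:Krein_transmission} now gives
\[
(A_\e-z)^{-1}-(A_{\beta_0,\beta_1}-z)^{-1}=-S_z\bigl(M(z)^{-1}+Q_{\beta_0,\beta_1}(z)\bigr)S_{\bar z}^*,
\]
and by the computation above together with \eqref{eq:estimatefortheorem} the bracket has operator norm $O(\e)$ uniformly in $z\in K_\sigma$. The estimate then follows once one checks that $\|S_z\|$ and $\|S_{\bar z}^*\|$ are bounded uniformly in $\e$ and in $z\in K_\sigma$: the uniform bound on $\|\Pi\|$ is provided by Lemma \ref{lemma:Pi_estimate} together with the corresponding uniform estimate for $\Pi_E$ on the straight edge rectangles, while the resolvent factor in $S_z=(1-zA_0^{-1})^{-1}\Pi$ is controlled via the identity $(1-zA_0^{-1})^{-1}=I+z(A_0-z)^{-1}$ and the uniform spectral gap of $A_0$.

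The main technical point that would require care is verifying that the singular choice $\beta_0=\Port$, $\beta_1=\P$ is admissible in Proposition \ref{prop:operator}: one needs $\Port+\P\Lambda$ to be closable on $\dom\Lambda$ and the closure $\overline{\Port+\P M(z)}$ to be boundedly invertible on the appropriate domain. This is the chief obstacle I anticipate, but it should reduce to a finite-rank argument, since $\P$ has finite-dimensional range spanned by the Steklov ground states $\{\psi_v\}$, which are piecewise constant on the contact plates and hence lie in $H^1(\Gamma)\subset\dom\Lambda$; the block upper-triangular factorisation above then survives closure, and bounded invertibility reduces to that of $\mathbb A$, already in hand.
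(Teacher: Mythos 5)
Your proposal is correct and follows essentially the same route as the paper: apply the Kre\u\i n formulae \eqref{eq:Krein_transmission} and \eqref{eq:Krein_general}, observe that $\Port+\P M(z)$ is block upper triangular so that the Schur--Frobenius formula gives $Q_{\Port,\P}(z)=-\P(\P M(z)\P)^{-1}\P$, and conclude via the cancellation with the leading block of \eqref{eq:estimatefortheorem}. Your added checks (uniform boundedness of $S_z$, $S_{\bar z}^*$ and closability of $\Port+\P\Lambda$ via the finite rank of $\P$ with range in $H^1(\Gamma)$) are details the paper leaves implicit, and they are handled correctly.
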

\begin{proof}
The proof is identical to that of \cite[Theorem 3.1]{ChEK}; we include it here for the sake of completeness.

For the resolvent $({A}_\e-z)^{-1}$ the formula (\ref{eq:Krein_transmission}) is applicable, in which for $M(z)^{-1}$ we use (\ref{eq:estimatefortheorem}). As for the resolvent
$\bigl(A_{\Port,\P}-z\bigr)^{-1},$ Proposition \ref{prop:operator} with $\beta_0=\Port,$ $\beta_1=\P$
is clearly applicable.
Moreover, for this choice of $\beta_0,$ $\beta_1,$ the operator
$$
Q_{\Port, \P}(z)=-\bigl(\overline{\Port+\P M(z)}\bigr)^{-1}\P
$$
in (\ref{eq:Krein_general}) is easily computable ({\it e.g.}, by the Schur-Frobenius inversion formula of \cite{Tretter}, see \eqref{eq:SchurF})\footnote{We remark that $\Port+\P M(z)$ is triangular (${\mathbb A}=\P M(z)\P,$ ${\mathbb B}=\P M(z)\Port,$ ${\mathbb E}=0,$ ${\mathbb D}=I$ in \eqref{eq:SchurF}) with respect to the decomposition $\mathcal H=\P\mathcal H\oplus \Port \mathcal H$.}, yielding
\begin{equation}
Q_{\Port, \P}(z)=-\P\bigl(\P M(z) \P\bigr)^{-1}\P,
\label{Qadd}
\end{equation}
and the claim follows.
\end{proof}

Already the estimate of Theorem \ref{thm:NRA} establishes norm-resolvent convergence of the family $A_\e$ to an operator which by \eqref{Qadd} is a relative (i.e. with respect the difference of resolvents) finite-rank perturbation of the decoupled operator $A_0$. However, in the case $z\in K_\sigma,$ which we will assume henceforth, it is possible to obtain a further simplification of this answer, relating the leading-order asymptotic term to a self-adjoint operator on the limiting metric graph. This procedure follows the blueprint of our paper \cite{ChEK}. We next briefly outline the related argument. For the case of $z$ not constrained to a compact, a similar argument yields a sequence of dimensionally reduced models, as mentioned in the Introduction.

Note first that $(A_0-z)^{-1}=(A_0^V-z)^{-1}\oplus (A_0^E-z)^{-1}$ is easily analysed. Indeed, by Proposition \ref{prop:Denzler} one has
\begin{equation}
(A_0^V-z)^{-1}=
	O\bigl(\e|\log \e|^\gamma\bigr).
\label{A0Vest}
\end{equation}
Furthermore, the operator $(A_0^E-z)^{-1},$ by separation of variables\footnote{This is the only place where we use the assumption about the geometric shape of the edge parts $Q_e$ of the thin structure. This can be generalised to the setup of \cite{Post}, allowing for curvature and non-uniform thickness, leading to Laplace-Beltrami operators on the edges of the limiting graph.}, is $O(\varepsilon^2)$-close to the Dirichlet Laplacian on the space
$$
H_G := \oplus_v L^2([0,l_e]\times \pmb 1_\e),
$$
where $\pmb 1_e:= \e^{-(d-1)/2} \pmb 1 $ is the normalised constant function in the variable transverse to  the edge $e$. The operator $(A_0-z)^{-1}$ is therefore 
close, uniformly in $z\in K_\sigma$, to an operator that is unitary equivalent to the resolvent of $A_0^G$, where $A_0^G$ is the Dirichlet-decoupled graph Laplacian pertaining to the graph $G$. The related error estimates are the same as in (\ref{A0Vest}). The finite-dimensional second term on the right-hand side of \eqref{Qadd} is therefore expected to encode the matching conditions at the vertices of the limiting graph $G$. In order to see this, one passes over to the generalised resolvent $R_\e(z):=P_E (A_\e-z)^{-1} P_E$, which is shown to admit the following asymptotics.

\begin{theorem}\label{thm:NRA_gen}
The operator family $R_\e(z)$ admits the following asymptotics in the operator-norm topology for $z\in K_\sigma$:
$$
R_\e(z)-R_\eff(z)=O(\e),
$$
where $R_\eff(z)$ is the solution operator for the following spectral BVP on the edge domain $Q_E$:
\begin{equation}
\label{eq:BVPBz}
\begin{aligned}
-&\Delta u-zu=f, \quad f\in L^2(Q_E),\\[0.2em]
&\beta_0(z)\Gamma_0^E u+ \beta_1\Gamma_1^E u=0,
\end{aligned}
\end{equation}
with $\beta_0(z)=\Port-\P B(z)\P$, $B(z):=-M_V(z)$ and $\beta_1=\P$.

The boundary condition in (\ref{eq:BVPBz}) can be written in the more conventional form
$$
\Port u|_{\Gamma^\varepsilon} =0, \quad \P \partial_n u = \P B(z)\P u\bigl|_{\Gamma^\varepsilon}.
$$
Equivalently,
$$
R_\e(z)-\bigl(A^E_{\Port-\P B(z)\P,\P}-z\bigr)^{-1}=O(\e),
$$
where $A^E_{\Port-\P B(z)\P,\P},$ for any fixed $z,$ is the operator in $L^2(Q_E)$ defined by Proposition \ref{prop:operator} relative to the triple $(\mathcal H, \Pi_E, \Lambda^E)$,  where the term ``triple" is understood in the sense of \cite{Ryzh_spec}. This operator is maximal anti-dissipative for $z\in\mathbb C_+$ and maximal dissipative for $z\in \mathbb C_-,$ see \cite{Strauss}.

\end{theorem}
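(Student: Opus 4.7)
The plan is to derive the claim from Theorem \ref{thm:NRA} by compressing with $P_E$. Sandwiching the estimate of Theorem \ref{thm:NRA} between $P_E$ on both sides gives
$$R_\e(z) = P_E(A_{\Port,\P}-z)^{-1}P_E + O(\e).$$
Expanding the right-hand side via the Kre\u\i n formula \eqref{eq:Krein_general} with $Q_{\Port,\P}(z)$ as in \eqref{Qadd}, and using the block-diagonal structure $A_0 = A_0^V \oplus A_0^E$ together with the identities \eqref{eq:Szstiffsoft}, one obtains
$$R_\e(z) = (A_0^E - z)^{-1} + S_z^E\,Q_{\Port,\P}(z)\,(S_{\bar z}^E)^* + O(\e),$$
where as before $Q_{\Port,\P}(z) = -\P(\P M(z)\P)^{-1}\P$.

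The crux of the argument is the identification of this expression with the resolvent of $A^E_{\beta_0(z),\P}$ that Proposition \ref{prop:operator} produces relative to the edge triple $(\mathcal H,\Pi_E,\Lambda^E)$. That resolvent reads
$$(A_0^E - z)^{-1} + S_z^E\,Q^E(z)\,(S_{\bar z}^E)^*, \qquad Q^E(z) := -\bigl(\overline{\beta_0(z) + \P M_E(z)}\bigr)^{-1}\P,$$
so matters reduce to showing $Q^E(z) = Q_{\Port,\P}(z)$. Relative to $\mathcal H = \P\mathcal H \oplus \Port\mathcal H$ the operator $\beta_0(z) + \P M_E(z) = \Port - \P B(z)\P + \P M_E(z)$ is upper triangular, with diagonal blocks
$$\mathbb A = \P\bigl(M_E(z) - B(z)\bigr)\P = \P M(z)\P, \qquad \mathbb D = \Port,$$
and upper-right block $\P M_E(z)\Port$. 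The key equality $M_E - B = M_E + M_V = M$ is the additivity of the $M$-function on $H^1(\Gamma)$ combined with $B = -M_V$. Applying the Schur--Frobenius inversion formula \eqref{eq:SchurF} and post-multiplying by $\beta_1 = \P$, which selects only the first block-column (so that the off-diagonal contribution $\P M_E(z)\Port$ makes no contribution owing to $\Port\P = 0$), produces exactly $Q^E(z) = -\P(\P M(z)\P)^{-1}\P$, as required.

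The two equivalent forms of the transmission condition in \eqref{eq:BVPBz} follow by separate projection of $\beta_0(z)\Gamma_0^E u + \P\Gamma_1^E u = 0$ onto $\P\mathcal H$ and $\Port\mathcal H$. The maximal dissipativity / anti-dissipativity claim is a consequence of the Na\u\i mark--Strauss theory \cite{Strauss}: $R_\e(z)$, being the compression of the self-adjoint resolvent $(A_\e-z)^{-1}$, is by construction a generalised resolvent, and this dissipativity structure persists in the passage to the $O(\e)$-close leading-order approximant $R_\eff(z)$. The main technical obstacle I foresee lies in justifying the Schur--Frobenius step on the full domain of the closure $\overline{\beta_0(z) + \P M_E(z)}$: one must combine the bounded invertibility of $\P M(z)\P$ for $z\in K_\sigma$ (established in the run-up to Theorem \ref{thm:NRA}) with the behaviour of $\P M_E(z)\Port$ on $\dom\Lambda^E \cap \Port\mathcal H$ to ensure that the triangular block-matrix admits a bounded inverse on $\mathcal H$ whose action coincides with the formal expression used above.
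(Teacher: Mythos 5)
Your argument is correct and is essentially the paper's own proof: the paper disposes of this theorem by citing Theorem \ref{thm:NRA} together with \cite[Theorem 3.6]{ChEK} and the single observation $\P M(z)\P=\P M_E(z)\P+\P M_V(z)\P$, which is precisely the identity you use to show that the Kre\u\i n formula for the edge triple with $\beta_0(z)=\Port-\P B(z)\P$, $\beta_1=\P$ reproduces $-\P(\P M(z)\P)^{-1}\P$ after the (upper-triangular) Schur--Frobenius inversion. The technical point you flag about the off-diagonal block is already covered by the boundedness of $\P M_E(z)\Port$ (finite rank, since $\psi_v\in\dom\Lambda^E$), the same fact underlying the paper's block representation of $M(z)$.
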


The \emph{proof} of the theorem follows immediately from Theorem \ref{thm:NRA}, see \cite[Theorem 3.6]{ChEK} together with the observation that
$$
\P M(z)\P = \P M_E(z) \P + \P M_V(z) \P.
$$

The next step of our argument is to introduce the truncated\footnote{In what follows we consistently supply the (finite-dimensional)  ``truncated'' spaces and operators pertaining to them by the breve overscript.} (reduced) boundary space $\breve{\mathcal H}$ in order to make all the ingredients finite-dimensional.

We put $\breve {\mathcal H}:=\P \mathcal H$ (noting that in our setup $\breve{\mathcal H}$ is $N$-dimensional, where $N$ is the number of vertices, see Section \ref{setup_section}). Introduce the truncated Poisson operator on $\breve{\mathcal H}$ by $\breve{\Pi}_E:=\Pi_E|_{\breve{\mathcal H}}$ and the truncated DN map $\breve{\Lambda}^E:=\P\Lambda^E\vert_{\breve {\mathcal H}}.$
Then the following statement holds.

\begin{proposition}[\cite{ChEK}, Theorem 3.7]
\label{thm:NRA_gen_truncated}

1. The formula
\begin{equation}\label{eq:gen_res_trunc}
R_\eff(z)=\bigl(A_0^E-z\bigr)^{-1}-\breve{S}_z^E\bigl(\breve{M}_E(z)-\P B(z)\P\bigr)^{-1} (\breve{S}_{\bar z}^E)^*
\end{equation}
holds, where $\breve{S}_z^E$ is the solution operator of the problem
$$
\begin{aligned}
-\Delta u_\phi-z u_\phi&=0,\ \ u_\phi\in \dom A_0^E\dotplus \ran\breve{\Pi}_E,\\[0.1cm]
\Gamma_0^E u_\phi&= \phi, \quad \phi\in \breve{\mathcal H},
\end{aligned}
$$
and $\breve{M}_E$ is the $M$-operator defined in accordance with \eqref{defn:M-function}, \eqref{defn:M-stiffsoft} relative to the triple $(\breve{\mathcal H}, \breve{\Pi}_E, \breve{\Lambda}^E)$.

2. The ``effective'' generalised resolvent $R_\eff(z)$ is represented as the generalised resolvent of the problem
\begin{equation*}
\begin{gathered}
-\Delta u-zu =f, \quad f\in L^2(Q_E),\quad u\in \dom A_0^E\dotplus \ran \breve{\Pi}_E,\\[0.4em]
\P\partial_n u\bigr|_{\Gamma^\varepsilon} = \P B(z)\P u\bigr|_{\Gamma^\varepsilon}.
\end{gathered}
\end{equation*}

3. The triple $(\breve{\mathcal H},\breve{\Gamma^\varepsilon}_0^E, \breve{\Gamma^\varepsilon}_1^E)$ is the classical boundary triple \cite{Gor,DM} for the operator $A_{\max}$ defined by the differential expression $-\Delta$ on the domain $\dom A_{\max}=\dom A_0^E\dotplus \ran\breve{\Pi}_E.$ Here $\breve{\Gamma^\varepsilon}_0^E$ and $\breve{\Gamma^\varepsilon}_1^E$ are defined on $\dom A_{\max}$ as the operator of the boundary trace on $\Gamma$ and
$\P\partial_n u,$ respectively.
\end{proposition}

We now consider the operator $\P B(z)\P$ in \eqref{eq:gen_res_trunc}; since $B=-M_V$ by definition, we invoke the estimates derived in Section 3  to obtain
$$
\P B\P=-\P\Lambda^V \P-z \P \Pi_V^*\Pi_V \P + O(\e|\log \e|^\gamma) = -z \breve{\Pi}_V^*\breve{\Pi}_V +O(\e|\log \e|^\gamma),
$$
with a uniform estimate for the remainder term. Here the truncated Poisson operator $\breve{\Pi}_V$ is introduced as $\breve{\Pi}_V:=\Pi_V\vert_{\breve{\mathcal H}}$ relative to the same truncated boundary space as above, $\breve {\mathcal H}=\P \mathcal H$.  As a result, we obtain
$$
R_\eff (z)-R_\hom(z)=O\bigl(\e|\log \e|^\gamma\bigr),
$$
with
\begin{equation}
\label{eq:Rhom}
R_\hom(z):= (A_0^E-z)^{-1}-\breve{S}_z^E\bigl(\breve{M}_E(z)+
 z \breve{\Pi}_V^*\breve{\Pi}_V\bigr)^{-1} (\breve{S}_{\bar z}^E)^*.
\end{equation}
By a classical result of \cite{Strauss} (see also \cite{Naimark1940,Naimark1943}), the operator $R_{\rm eff}(z)$
is a generalised resolvent, so it defines a
$z$-dependent family of closed densely defined operators in $L^2(Q_E),$ which are maximal anti-dissipative for $z\in \mathbb C_+$ and maximal dissipative for $z\in \mathbb C_-$. Writing the resolvent $(A_\e-z)^{-1}$ in the matrix form relative to the orthogonal decomposition $L^2(Q)=P_E L^2(Q)\oplus P_V L^2(Q)=L^2(Q_E)\oplus L^2(Q_V)$ then yields the following result.
\begin{theorem}
\label{thm:main_fin}
The resolvent $({A}_\e-z)^{-1}$ admits the following asymptotics in the uniform operator-norm topology:
$$
\bigl({A}_\e-z\bigr)^{-1}=\mathcal R_{\rm eff}(z) + O\bigl(\e|\log \e|^\gamma\bigr),
$$
where the operator $\mathcal R_{\rm eff}(z)$ has the following representation relative to the decomposition
$L^2(Q_E)\oplus L^2(Q_V)$:
\begin{equation}
	\label{eq:NRAbreve}
\mathcal R_{\rm eff}(z)=
\begin{pmatrix}
R_{\rm eff}(z)&\ \ \Bigl(\mathfrak K_{\bar z}\bigl[R_{\rm eff}(\bar z)-(A_0^{E}-\bar z)^{-1}\bigr]\Bigr)^*\breve{\Pi}_V^*\\[0.8em] \breve{\Pi}_V\mathfrak{K}_z \bigl[R_{\rm eff}(z)-(A_0^{E}-z)^{-1}\bigr] & \ \ \breve{\Pi}_V\mathfrak K_{z}\Bigl(\mathfrak K_{\bar z}\bigl[R_{\rm eff}(\bar z)-(A_0^{E}-\bar z)^{-1}\bigr]\Bigr)^*\breve{\Pi}_V^*
\end{pmatrix}.
\end{equation}
Here $\mathfrak{K}_z:=\Gamma_0^E|_{\mathfrak N_z}$ with $\mathfrak N_z:=\ran S_z^E\P$, $z\in \mathbb C_\pm$,
 and the generalised resolvent $R_{\rm eff}(z)$ is defined by \eqref{eq:Rhom}.
\end{theorem}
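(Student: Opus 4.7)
The plan is to derive each of the four blocks of $(A_\e-z)^{-1}$ relative to the decomposition $L^2(Q)=L^2(Q_E)\oplus L^2(Q_V)$ directly from the Kre\u\i n formula \eqref{eq:Krein_transmission} combined with the Schur simplification \eqref{eq:estimatefortheorem} of $M(z)^{-1}$. The $(1,1)$ block has already been identified with $R_\hom(z)$ via Theorem \ref{thm:NRA_gen} and the derivation preceding \eqref{eq:Rhom}; only the three entries involving the vertex projection $P_V$ need to be treated separately.

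For those three entries I would apply the same recipe uniformly: start from $(A_\e-z)^{-1}=(A_0-z)^{-1}-S_z M(z)^{-1}S_{\bar z}^*$, use $P_V S_z=S_z^V$, absorb $(A_0^V-z)^{-1}=O(\e/|\log\e|)$ (Proposition \ref{prop:Denzler}) into the remainder, and rewrite $M(z)^{-1}\approx\P T(z)\P$ with $T(z):=(\breve M_E(z)+z\breve\Pi_V^*\breve\Pi_V)^{-1}$ via the identity $\breve M_V(z)=z\breve\Pi_V^*\breve\Pi_V+O(\e/|\log\e|)$ already established for \eqref{eq:Rhom}. Using $S_z^V\P=\breve\Pi_V+O(\e/|\log\e|)$ from Lemma \ref{lemma:Pi_estimate} and $S_z^E\P=\breve S_z^E$ by definition, this produces the compact expressions $-\breve S_z^E T(z)\breve\Pi_V^*$, $-\breve\Pi_V T(z)(\breve S_{\bar z}^E)^*$ and $-\breve\Pi_V T(z)\breve\Pi_V^*$ for the $(1,2)$, $(2,1)$ and $(2,2)$ blocks respectively, each modulo an $O(\e/|\log\e|)$ remainder.

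It remains to match these compact expressions with \eqref{eq:NRAbreve}. Since $\mathfrak K_z=\Gamma_0^E|_{\mathfrak N_z}$ is by construction the left inverse of $\breve S_z^E$ on $\mathfrak N_z=\ran\breve S_z^E$, applying $\mathfrak K_{\bar z}$ to $R_\hom(\bar z)-(A_0^E-\bar z)^{-1}=-\breve S_{\bar z}^E T(\bar z)(\breve S_z^E)^*$ gives $-T(\bar z)(\breve S_z^E)^*$; the symmetry $T(\bar z)^*=T(z)$, which follows directly from part 3 of Proposition \ref{prop:M} applied to $\breve M_E$ together with the self-adjointness of $\breve\Pi_V^*\breve\Pi_V$, then brings each of the three compact expressions into the precise form displayed in \eqref{eq:NRAbreve}.

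The chief difficulty will be establishing that $T(z)$ is well-defined and uniformly bounded in both $\e$ and $z\in K_\sigma$, since without such a bound the perturbation $\breve M_V(z)\leadsto z\breve\Pi_V^*\breve\Pi_V$ inside the inversion cannot be closed. Here I would invoke the Na\u\i mark--Strauss characterisation already cited after \eqref{eq:Rhom}: it identifies $R_\hom(z)$ as the generalised resolvent of a maximal (anti-)dissipative $z$-family, which in turn forces the finite-dimensional matrix $T(z)$ to be uniformly bounded on $K_\sigma$. Once this bound is in place, every $O(\e/|\log\e|)$ remainder propagates through composition with the uniformly bounded operators $\breve S_z^E$, $\breve\Pi_V$ and their adjoints, and \eqref{eq:NRAbreve} follows.
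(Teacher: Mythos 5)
Your block-by-block computation is, in substance, the paper's own (largely implicit) argument: the $(1,2)$, $(2,1)$ and $(2,2)$ entries are read off the Kre\u\i n formula \eqref{eq:Krein_transmission} with $M(z)^{-1}$ replaced by \eqref{eq:estimatefortheorem}, using $P_V(A_0-z)^{-1}P_E=0$, $(A_0^V-z)^{-1}=O(\e/|\log\e|)$, $S_z^V\P=\breve\Pi_V+O(\e/|\log\e|)$ and $S_z^E|_{\P\mathcal H}=\breve S_z^E$; and your identification of the resulting expressions $-\breve S_z^E T(z)\breve\Pi_V^*$, $-\breve\Pi_V T(z)(\breve S_{\bar z}^E)^*$, $-\breve\Pi_V T(z)\breve\Pi_V^*$ with the entries of \eqref{eq:NRAbreve}, via $\mathfrak K_z\breve S_z^E=I$ on $\breve{\mathcal H}$ and $T(\bar z)^*=T(z)$, is correct.

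The one genuine weak point is your justification of the uniform (in $\e$ and $z\in K_\sigma$) bound on $T(z)=(\breve M_E(z)+z\breve\Pi_V^*\breve\Pi_V)^{-1}$. Invoking the Na\u\i mark--Strauss characterisation is both circular and insufficient here: in the paper that characterisation is applied \emph{after} $R_\hom(z)$ has been defined, i.e.\ it presupposes that $T(z)$ exists; and even granting that $R_\hom(z)$ is a generalised resolvent, this only gives $\|R_\hom(z)\|\le|\Im z|^{-1}$ for each fixed $\e$, from which a bound on the finite-dimensional $T(z)$ would require $\e$-uniform lower bounds for $\breve S_z^E$ on $\breve{\mathcal H}$ and for $(\breve S_{\bar z}^E)^*$, which you do not supply. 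The needed bound is, however, already contained in the material you use elsewhere: since $\Im\bigl(T(z)^{-1}\bigr)=\Im z\,\bigl[(\breve S^E_{\bar z})^*\breve S^E_{\bar z}+\breve\Pi_V^*\breve\Pi_V\bigr]$ and the paper's estimate $\|\Pi_V\P\phi\|^2\ge c_2\|\P\phi\|^2$ (the one used to invert $\mathbb A=\P M(z)\P$) gives $\bigl|\langle T(z)^{-1}\phi,\phi\rangle\bigr|\ge c_2\sigma\|\phi\|^2$ on $\breve{\mathcal H}$, one gets $\|T(z)\|\le(c_2\sigma)^{-1}$ uniformly; equivalently, $T(z)^{-1}=\mathbb A+O(\e/|\log\e|)$ together with the uniform bound on $\mathbb A^{-1}$ yields $T(z)=\mathbb A^{-1}+O(\e/|\log\e|)$ for small $\e$. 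Replace the Na\u\i mark--Strauss appeal by this dissipativity estimate and your argument closes, coinciding with the paper's route.
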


The above theorem provides us with the simplest possible leading-order term $\mathcal R_{\rm eff}(z)$ of the asymptotic expansion for $(A_\varepsilon-z)^{-1}.$ However, it is not yet obvious whether it
is the resolvent of some self-adjoint operator in the space $L^2(Q_E)\oplus \breve{\Pi}_V \breve{\mathcal H}\subset L^2(Q).$ It turns out that this is indeed so, which is seen via the following explicit construction.

Put $L^2(G):=\oplus_e L^2(0,l_e)$, $H^2(G):=\oplus_e H^2(0,l_e)$. For all $u\in H^2(G),$ denote by  $u_{ev}$ the limit of $u_e(x):=u|_e (x)$ at the vertex $v$.
Let $H_{\rm eff}:=L^2(G)\oplus \mathbb C^N,$ and set
\begin{equation}
\label{eq:domain_fin_modII}
\begin{aligned}
\dom \mathcal A_{\hom}=\bigl\{(u,\beta)^\top\in H_{\rm eff}:\ u\in H^2(G),\  &u_{ev}=u_{e'v}=:u_v \text{ for any } v  \\
&\text{ and } e,e' \text{ incident to } v, \text{ and }  \beta = \kappa u_V\bigr\},
\end{aligned}
\end{equation}
where $u_V$ is the $N$-dimensional vector of $\{u_v\}_{v\in V}$ and $\kappa$ is the diagonal matrix
\begin{equation}\label{eq:kappa}
\kappa:=\diag\bigl\{|Q_v^{0}|^{1/2}\bigr\}.
\end{equation}
The action of the operator is set by
\begin{equation}
\label{eq:operator_fin_modII}
\mathcal A_{\hom}\binom{u}{\beta}=
\left(\begin{array}{c}- u''\\[0.3em]
-\kappa^{-1} \partial_n u|_V
\end{array}\right),\qquad \binom{u}{\beta}\in \dom \mathcal A_{\hom},
\end{equation}
where $\partial_n u|_V$ is the $N$-dimensional vector $\{\sum_{e \sim v} \partial_n u_e|_v\}_{v\in V}$, i.e., the vector whose each element is represented by the sum of edge-inward normal derivatives of the function $u$ over all the edges incident to the vertex $v$. We write $e\sim v$ if and only if the edge $e$ is incident to the vertex $v$.

The main result of the present work, which is obtained by computing explicitly the resolvent of \eqref{eq:domain_fin_modII}--\eqref{eq:operator_fin_modII} and comparing it with \eqref{eq:NRAbreve} (see details of a similar computation in \cite{GrandePreuve}), is formulated next.
\begin{theorem}\label{thm:general_homo_result}
The resolvent $({A}_\e-z)^{-1}$ admits the following estimate in the uniform operator norm topology, uniform in $z\in K_\sigma$:
\begin{equation*}
\bigl({A}_\e-z\bigr)^{-1}-\Theta\bigl(\mathcal A_{\hom}-z\bigr)^{-1}\Theta^*=O\bigl(\e|\log \e|^\gamma\bigr),
\end{equation*}
where $\Theta$ is a partial isometry from  $H_{\rm eff}$ onto $L^2(Q)$, acting as follows:
\begin{itemize}
  \item For every edge $e\in G$, $e=[0,l_e]$, it embeds $u\in H^2(e)$ into $L^2(Q_e)$ as $u(x)\times \e^{-(d-1)/2} \pmb 1(y),$ where $y$ is the variable in the direction transverse to that of $x$;
  \item For every vertex $v\in G$, it embeds the value $u_v$, i.e., the common value of $u\in H^2(G)$ at the vertex $v$, into $L^2(Q_v)$ as $\e^{-(d-1)/2} u_v \pmb 1$.
\end{itemize}
\end{theorem}

\section{Analysis of vertex matching conditions}

In the present section we continue our study of the operator \eqref{eq:domain_fin_modII}--\eqref{eq:operator_fin_modII} associated with an arbitrary metric graph $G$, with a view to analysing its spectral structure. We will show that the matching conditions at graph vertices associated with the spectral problem for the mentioned operator, albeit closely resembling $\delta$-type conditions with coupling constants linear in the spectral parameter $z,$ can be in fact represented (up to a unitary gauge) by $\delta'$-type matching conditions at all $z\neq 0$ (while at $z=0$ they coincide with the classical Kirchhoff condition). We will assume throughout that this graph contains no loops, in line with the assumptions imposed on the thin network studied above. We will further assume without loss of generality that the graph $G$ is connected and that the matrix $\kappa$ is invertible.

Since the operator $\mathcal{A}_{\hom}$ can be viewed as a self-adjoint out-of-space extension (see, e.g., \cite{Strauss_survey} and references therein) of a symmetric differential operator on the metric graph $G$, it is amenable to the classical boundary triples theory, see \cite{Schmudgen,BHS}. We recall that for a closed and densely defined symmetric
operator $\mathcal A$ on a separable Hilbert space $H$ with domain~$\dom\mathcal A$,
a boundary triple is defined as follows.

\begin{definition}[\cite{Kochubej}]\label{def:bonudaryTriple}
	A triple $(\mathcal{K}, \Gamma_0, \Gamma_1 )$ consisting of an auxiliary
	Hilbert space $\mathcal{K}$ and linear mappings $\Gamma_0, \Gamma_1$ defined
	everywhere on  $\dom \mathcal A^*$ is called a \emph{boundary triple} for $\mathcal A^*$ if the following
	conditions are satisfied:
	\begin{enumerate}
		\item
		The abstract Green's formula is valid
		\begin{equation*}\label{GreenFormula}
			(\mathcal A^*\vec{u},\vec{v})_H - (\vec{u},\mathcal A^*\vec{v})_H = (\Gamma_1 \vec{u}, \Gamma_0 \vec{v})_{\mathcal{K}} -
			(\Gamma_0 \vec{u}, \Gamma_1 \vec{v})_{\mathcal{K}},\quad \vec{u},\vec{v} \in \dom\mathcal A^*
		\end{equation*}
		\item
		For any $Y_0, Y_1 \in{\mathcal{K}}$ there exist $\vec{u} \in
		\dom\mathcal A^*$, such that $\Gamma_0 \vec{u} = Y_0$, $\Gamma_1 \vec{u} = Y_1$.
		In other words,
		the mapping~$\vec{u} \mapsto \Gamma_0 \vec{u} \oplus \Gamma_1 \vec{u} $ from $\dom\mathcal A^*$
		to ${\mathcal{K}}\oplus {\mathcal{K}}$ is surjective.
	\end{enumerate}
\end{definition}
\noindent
It can be shown~(see \cite{Kochubej}) that a boundary
triple for~$\mathcal A^*$
exists,
%
although it is not unique.
%
%

\begin{definition}\label{def:WeylFunction}
	Let $\mathscr T = (\mathcal{K}, \Gamma_0, \Gamma_1 )$
	be a boundary triple
	of~$\mathcal A^*$.
	\emph{The Weyl function} of $\mathcal A^*$ corresponding to~$\mathscr T$ and
	denoted by $M(z)$, $z \in\mathbb C\setminus \mathbb R$, is an analytic
	operator-function with a positive imaginary part for~$z \in \mathbb C_+$
	(i.e., an operator $R$\nobreakdash-function) with values in
	the algebra of bounded operators on $\mathcal K$ such that
	\[
	M(z)\Gamma_0 \vec{u} = \Gamma_1 \vec{u} \qquad \forall\vec{u} \in {\rm ker}(\mathcal A^* -zI).
	\]
	For $z\in\mathbb C \setminus\mathbb R$ one has~$(M(z))^* = (M(\bar z))$ and
	$\Im (z)\Im(M(z)) > 0$.
\end{definition}

A comparison with the assertion 4 of Proposition \ref{prop:M} shows that the Weyl function $M(z)$ in the context of the boundary triples theory is intimately related to the object introduced in Definition \ref{defn:3}. The overall setup leading to its construction is, however, different and is based on the explicit choice of the boundary operators $\Gamma_0$ and $\Gamma_1$.

\begin{definition}
	An extension~$\mathscr A$ of a closed densely defined
	symmetric operator~$\mathcal A$ is
	called \emph{almost solvable}, and is denoted by $\mathscr A = A_B,$ if
	there exist a boundary
	triple~$(\mathcal{K}, \Gamma_0, \Gamma_1 )$ for~$\mathcal A^*$
	and a bounded operator~$B : \mathcal{K} \to \mathcal K$ defined
	everywhere in $\mathcal K$ such that
	\begin{equation*}
		\vec{u} \in \dom A_B \iff  \Gamma_1 \vec{u} =  B \Gamma_0 \vec{u}.
	\end{equation*}
\end{definition}
\noindent
This definition implies that $\dom A_B \subset \dom \mathcal A^*$
and $A_B$ is a restriction of~$\mathcal A^*$ to
the linear set $\dom A_B := \{\vec{u} \in \dom\mathcal A^* : \Gamma_1\vec{u} =
B \Gamma_0 \vec{u}\}$.
In this context, the operator~$B$ plays the r\^ole of a
parameter for the
family of extensions~$\{A_B \mid B : \mathcal K \to \mathcal K\}$.
It can be shown (see~\cite{CherKisSilva1} for references)
that
the resolvent set of~$A_B$ is
non-empty (i.e. $A_B$ is maximal),
both $A_B$ and $(A_B)^* = A_{B^*}$
are restrictions of $\mathcal A^*$ to their
domains, and
$A_B$ and $B$
are selfadjont or dissipative simultaneously.

Under the additional assumption that $\mathcal A$ is \emph{simple} (or, in other words, completely non-self-adjoint), that is, it has no reducing self-adjoint ``parts'',
the spectrum of~$A_B$ coincides, counting multiplicities, with the
set of points~$z_0 \in \mathbb C$ into which $(M(z_0) -B)^{-1}$
does not admit analytic continuation. In the general case, however, the spectrum is a union of the ``zeroes'' of the operator-valued function $M(z)-B$ introduced above and the spectrum of the self-adjoint ``part'' of the symmetric operator $\mathcal{A}$ in its Wold decomposition \cite{Krein_lectures}.

Our immediate aim is to construct a convenient boundary triple for the operator $\mathcal{A}_{\rm eff}$. In doing so, we rely upon the framework (in a particular case of a loop-graph with exactly one vertex) of the paper \cite{CENS}.

We define the symmetric operator $\mathcal{A}$ as follows (cf. \eqref{eq:domain_fin_modII}):
\begin{equation*}
	\begin{aligned}
	\dom \mathcal A=\Bigl\{(u,\beta)^\top\in H_{\rm eff}:\ &u\in H^2(G),\  u_{ev}=u_{e'v}=:u_v \text{ for any } v  \\
	&\text{ and } e,e' \text{ incident to } v,\quad \partial_n u|_V=0, \text{ and }  \beta = \kappa u_V \Bigr\},
	\end{aligned}
\end{equation*}
where $u_V$ is, as above, the $N$-dimensional vector of $\{u_v\}_{v\in V}$, $\kappa$ is the diagonal matrix \eqref{eq:kappa}, and $\partial_n u|_V$ is the $N$-dimensional vector of $\{\sum_{e \sim v} \partial_n u_e|_v\}_{v\in V}$. The action of the operator $\mathcal{A}$ is set by \eqref{eq:operator_fin_modII}. The operator thus defined is clearly symmetric in $L^2(G)\oplus\mathbb{C}^N$; its adjoint $\mathcal{A}^*$ is defined by the same expression \eqref{eq:operator_fin_modII} on the domain
\begin{equation*}
	\begin{aligned}
	\dom \mathcal A^*=\Bigl\{(u,\beta)^\top\in H_{\rm eff}:\ u\in H^2(G),\  u_{ev}=u_{e'v}=:u_v &\text{ for any } v  \\
	&\text{ and } e,e' \text{ incident to } v \Bigr\}.
	\end{aligned}
\end{equation*}

We have the following lemma.

\begin{lemma}\label{lemma:triple}
	Let $\mathcal{K}=\mathbb{C}^N$, $\Gamma_0 (u,\beta)^\top:= \partial_n u|_V$, and $\Gamma_1(u,\beta)^\top:= \kappa^{-1}\beta - u_V$. The triple $(\mathcal{K},\Gamma_0,\Gamma_1)$ is a boundary triple for the operator $\mathcal{A}^*.$ The operator $\mathcal{A}_{\rm eff}$ is a self-adjoint almost solvable extension of $\mathcal{A}$, corresponding to the matrix $B=0$ with respect to this boundary triple.
\end{lemma}

The \emph{proof} of the above lemma is obtained via integration by parts; for details, see \cite{CENS}.

We shall further require two ordinary differential operators on the metric graph $G$ together with their boundary triples. Consider $A^\delta_{\max}$ to be the operator generated by the negative Laplacian $-\Delta$ on $L^2(G)$, defined on the domain
$$
\dom A^\delta_{\max} = \{u\in H^2(G): u_v:=u_{ev}=u_{e'v} \text{ for any } v
\text{ and } e,e' \text{ incident to } v\}.
$$
In the paper \cite{YKK} it is shown that this is a natural choice of a maximal operator if one seeks to consider the so-called $\delta$-type matching conditions at the graph vertices, i.e., matching conditions of the type
\begin{equation}
u \text{ continuous at every vertex } v \text{ and } \partial_n u|_V = \eta^2 u_V,
\label{eta_Kirchhoff}
\end{equation}
where $\eta$ is a diagonal matrix. The conditions (\ref{eta_Kirchhoff}) reduce to Kirchhoff, or standard, matching conditions under the choice $\eta=0$. The natural boundary triple for $A^\delta_{\max}$ is $(\mathcal{K},\Gamma_0^\delta, \Gamma_1^\delta),$ where $\mathcal{K}=\mathcal{C}^N$,
$\Gamma_0^\delta u = u_V,$ and $\Gamma_1^\delta u = \partial_n u|_V$. The corresponding  Weyl function (``$M$-matrix") admits the form \cite{YKK} $M_\delta(z):=\{m^\delta_{vv'}\}_{v,v'\in V}$, where
\begin{equation}\label{eq:M_good}
	m^\delta_{vv'}(k)=\begin{cases}
		-k\sum_{e\sim v} \cot k l_e, & v=v',\\[0.2em]
		k\sum_{e\sim v, e\sim v'} (\sin k l_e)^{-1}, & v\not=v'.
	\end{cases}
\end{equation}
Here $k:=\sqrt{z}$ is such that $\Im k\geq 0.$

Newt, we consider the \emph{magnetic} Laplacian on the graph $G$ subject to $\delta'$-type matching at the vertices. Namely, we assume that for all edges $e$ the action of the operator is described as
\begin{equation}
-\biggl(\frac{d}{dx}+{\rm i}\tau_e\biggr)^2,
\label{delta_prime_op}
\end{equation}
where $\tau_e$ is an edgewise-constant magnetic potential. In order to introduce $\delta'$-type matching, consider the co-normal derivatives
$$
\partial_n^\tau u_e|_v =
\begin{cases}
	u_e'|_v+{\rm i}\tau_e u_e|_v & \mbox{if } v \text{ is the left endpoint of } e, \\[0.2em]
	-(u_e'|_v+{\rm i}\tau_e u_e|_v) & \mbox{otherwise}.
\end{cases}
$$
We say that the magnetic Laplacian on $G$ is subject to $\delta'$-type matching at the vertices if
\begin{equation}\label{eq:delta-prime-conditions}
	\partial_n^\tau u|_v:=\partial_n^\tau u_e|_v=\partial_n^\tau u_{e'}|_v \text{ for any } e,e'\sim v, \text{ and }
	\Sigma u|_V = \eta^2 \partial_n^\tau u|_V \quad \forall\ v\in V,
\end{equation}
where $\partial_n^\tau u|_V$ and  $\Sigma u|_V$ denote the vectors $\{\partial_n^\tau u|_v\}_{v\in V}$ and $\{\sum_{e\sim v} u_e|_v\}_{v\in V},$ respectively.

By an argument similar to that of \cite{YKK}, see also \cite{GrandePreuve}, one easily checks that $(\mathcal{K},\hat{\Gamma}_0,\hat{\Gamma}_1)$ is a boundary triple for the magnetic Laplacian $A^{\delta'}_{\max}$ defined on
$$
\dom A^{\delta'}_{\max} = \{u\in H^2(G): \partial_n^\tau u|_v:=\partial_n^\tau u_e|_v=\partial_n^\tau u_{e'}|_v  \text{ for any } v
\text{ and } e,e' \text{ incident to } v\},
$$
if  $\mathcal{K}=\mathbb{C}^N$, $\hat{\Gamma}_0 u:= \{\partial_n^\tau u|_v\}_{v\in V}$, and, finally,
$\hat{\Gamma}_1 u:= - \{\sum_{e\sim v} u_e|_v \}_{v\in V}$. The corresponding $M$-matrix  $\hat M(z):=\{\hat m_{vv'}\}_{v,v'\in V}$  admits the form \cite{YKK}
$$
\hat m_{vv'}(k)=\begin{cases}
	-k^{-1}\sum_{e\sim v} \cot k l_e, & v=v',\\
	-k^{-1}\sum_{e\sim v, e\sim v'} \exp({\rm i} \sigma_{e}(v,v')\tau_e l_e) (\sin k l_e)^{-1}, & v\not=v'.
\end{cases}
$$
Here $k=\sqrt{z}$ such that $\Im k\geq 0$ and $\sigma_e(v,v')=1$ if $e$ is directed from $v$ to $v'$, $\sigma_e(v,v')=-1$ otherwise.

We will now fix the values of the magnetic potential as follows: $\tau_e:=\pi/l_e$. The operator $A^{\delta'}_{\max}$ corresponding to this choice will be henceforth denoted by $\hat{A}_{\max}$. Its $M$-matrix $\hat M$ relative to the triple $(\mathcal{K},\hat{\Gamma}_0,\hat{\Gamma}_1)$ admits the form
$$
\hat m_{vv'}(k)=\begin{cases}
	-k^{-1}\sum_{e\sim v} \cot k l_e, & v=v',\\
	k^{-1}\sum_{e\sim v, e\sim v'} (\sin k l_e)^{-1}, & v\not=v',
\end{cases}
$$
which coincides with \eqref{eq:M_good} up to the factor $z^{-1}$. We remark that the operator $\hat{A}_{\max}$ and any of its self-adjoint restrictions can be unitary transformed into a regular (non-magnetic) Laplacian on the same graph $G$ by a standard gauge transform; this will however be reflected in the corresponding change of matching conditions at the vertices. Motivated by applications to electromagnetic waves propagation \cite{KisRyad}, here we prefer to proceed with the magnetic setup.

Returning to the analysis of the operator $\mathcal{A}_{\rm eff}$, we now have the following lemma,

\begin{lemma}\label{lemma:Mdeltadeltaprime}
	Relative to the boundary triple of Lemma \ref{lemma:triple}, the Weyl $M$-matrix of the operator $\mathcal{A}_{\rm eff}$ admits the form
	$$
	M(z)=-M_\delta^{-1}(z)-\frac{1}{z} \kappa^{-2}=-\frac{1}{z} (\hat M^{-1}(z)+\kappa^{-2}).
	$$
\end{lemma}

\begin{proof}
	In view of Lemma \ref{lemma:triple}, consider the vector $(u,\beta)^\top \in \dom \mathcal{A}^*$ such that
	$$
	\mathcal{A}^* \binom{u}{\beta}=z \binom{u}{\beta}.
	$$
	By the definition of $\mathcal{A}^*,$ one equivalently has
	$
	-u'' = zu \text{ on } G \text{ and } -\kappa^{-1} \partial_n u|_V =z \beta.
	$
	Abbreviating $\Gamma_0 (u,\beta)^\top =: \alpha$, one therefore has, in view of the definition of $\Gamma_0$ (see Lemma \ref{lemma:triple}):
	$$
	\beta=-\frac 1z \kappa^{-1} \alpha.
	$$
	Furthermore, taking now into account the definition of the second boundary operator $\Gamma_1$, one has
	$$
	\Gamma_1 (u,\beta)^\top = \kappa^{-1}\beta - u_V = -\frac 1z \kappa^{-2} \alpha -u_V.
	$$
	Note that the function $u$, by the definition of $\mathcal{A}^*$, must be continuous at every $v\in V$ and therefore belongs to the domain of the operator $A_{\max}^\delta$. Therefore, $u\in \ker (A_{\max}^\delta-z)$ and thus one has
	$
	\Gamma_1^\delta u = M_\delta(z) \Gamma_0^\delta u.
	$
	By construction, one has $\Gamma_0^\delta u = u|_V$ and $\Gamma_1^\delta u = \partial_n u|_V$, whence
	$
	u_V = M_\delta^{-1}(z) \alpha.
	$
	Ultimately,
	$$
	M(z)\alpha = \Gamma_1 (u,\beta)^\top = -\frac{1}{z} \kappa^{-2} \alpha - M_\delta^{-1}(z)\alpha,
	$$
	as claimed.
\end{proof}

Let $\mathcal{E}(\cdot)$ denote the orthogonal operator spectral measure of the self-adjoint operator $\mathcal{A}_{\rm eff}$ and $E(\cdot)$ the orthogonal spectral measure of the self-adjoint operator $\hat{A}_{-\kappa^2}$, where the latter is defined as the almost solvable extension of $(A^{\delta'}_{\max})^*$ corresponding to the parameterising operator $B=-\kappa^2$. In other words, it is the magnetic Laplacian on the graph $G$ subject to the condition $\tau_e=\pi/l_e$ for all $e\in E$ and $\delta'$-type matching conditions \eqref{eq:delta-prime-conditions} at the graph vertices with $\eta=\kappa$. We have the following statement.

\begin{theorem}\label{thm:last}
	For any $c_0>0$ the operators $\mathcal{A}_{\rm eff} \mathcal{E}(c_0,\infty)$ and $\hat A_{-\kappa^2} E(c_0,\infty)$ are unitary equivalent.
\end{theorem}

\begin{proof}
	In the generic case when $\mathcal{A}$ is simple (in particular when all $l_e$ are rationally independent), the claim follows immediately from Lemma \ref{lemma:Mdeltadeltaprime}. Indeed, in this case no positive $z$ can be an eigenvalue of the operator $\mathcal{A}$ and the same applies to the operator $(A^{\delta'}_{\max})^*$. Therefore, any reducing self-adjoint ``part'' of either symmetric operator can only be zero.
	
	In the general case, if $z_0>0$ is an eigenvalue of $\mathcal{A}$, then the corresponding eigenfunction solves
	$-u'' = z_0u$ on $G$
	subject to $-\kappa^{-1} \partial_n u|_V = z_0 \beta,$ $u_V=\kappa^{-1} \beta,$ $\partial_n u|_V =0$
	and therefore $u_V=\partial_n u|_V=0$. Thus any eigenfunction must be of the form $c_e \sin \sqrt{z_0} x$ on each $e$; moreover, one must also have $\sin \sqrt{z_0} l_e =0$ for all $e$ such that $c_e\not = 0$ and $\partial_n u|_V =0$. For each of these, the function that is edgewise transformed as $c_e \sin \sqrt{z_0} x \mapsto c_e \exp(-{\rm i}\tau_e x)\cos \sqrt{z_0} x$ is shown to be an eigenfunction of $(A^{\delta'}_{\max})^*$ corresponding to the eigenvalue $z_0$. The same argument applied in the opposite direction completes the proof.
\end{proof}

\begin{remark}
	The analysis of the unitary equivalence linking  $\mathcal{A}_{\rm eff}\mathcal{E}(c_0,\infty)$ and $\hat A_{-\kappa^2} E(c_0,\infty)$ is an exciting possible development from the point of view of classical functional analysis, since it appears to be a natural graph-based generalisation of the classical Hilbert transform. This can be seen, in particular, from the explicit calculation in the case of an infinite chain graph \cite{CherKis}.
\end{remark}

The above theorem shows that the spectral analysis of the operator $\mathcal{A}_{\rm eff}$ in relation to its non-zero spectrum reduces to that of the magnetic graph Laplacian with (non-trivial) $\delta'$-type matching condition at the graph vertices, which could come as a surprise given that the eigenvalue problem for $\mathcal{A}_{\rm eff}$ yields, for the first component of the eigenvector $(u,\beta)^\top,$ the equation
$-u'' = zu$ on $G$ subject to
$\partial_n u|_V = - z \kappa^2 u|_V,$
which on the face of it is a $\delta$-type matching condition, albeit with coupling constants proportional to the spectral parameter $z$. It would seem natural, therefore, for the self-adjoint operator $\mathcal{A}_{\rm eff}$ to be a $H^{-1}$ singular perturbation of the graph Laplacian with standard boundary conditions. Instead, our result shows that it is in fact a more singular $H^{-2}$ perturbation. The underlying reasons of this peculiar behaviour are discussed in detail in \cite{CEKS2022}, where the relationship of operators of the class considered with those in the area of zero-range potentials with internal structure, as introduced by B.\,S.\,Pavlov \cite{Pavlov1987, Pavlov_internal_structure}, is explained. We also point out that the assertion of Theorem \ref{thm:last} has been observed in a particular case of the cycle graph with one vertex (the loop) in \cite{CherKis} in the context of a high-contrast homogenisation problem on the real line.

We conclude the spectral analysis of the operator $\mathcal{A}_{\rm eff}$ by a brief discussion of its kernel and the comparison of the latter with that of the operator $\hat A_{-\kappa^2}$. It turns out that unlike what happens with its non-zero spectrum, the kernel $\ker \mathcal{A}_{\rm eff}$ of the operator $\mathcal{A}_{\rm eff}$ is that of the Kirchhoff graph Laplacian.

Note first that $\ker \mathcal{A}_{\rm eff}$ necessarily belongs to the non-simple (i.e., self-adjoint) part of $\mathcal{A}$. Indeed, for $(u,\beta)^\top\in\ker \mathcal{A}_{\rm eff}$ it has to satisfy
$$
-u'' = 0 \text{ on } G,\qquad
\partial_n u|_V =0, \quad u_V=\kappa^{-1} \beta,
$$
where the boundary conditions are equivalent to $\Gamma_0 (u,\beta)^\top=0$, $\Gamma_1 (u,\beta)^\top=0$. On the other hand, this is precisely the condition for $u$ to be in the kernel of a graph Laplacian with Kirchhoff matching conditions. One therefore infers from \cite{Kuchment2} that $\dim\ker \mathcal{A}_{\rm eff}=1$ (in our case of connected graphs), and the elements of $\ker \mathcal{A}_{\rm eff}$ are constants on $G$.

The kernel of $\hat A_{-\kappa^2}$ is spanned by functions $u$ such that
$$
-u''=0 \text{ on } G \text{ and } \Sigma u|_V=\kappa^2\partial^\tau_n u|_V.
$$
It is easily checked that a non-trivial solution to this problem could exist only if $\partial^\tau_n u|_V=0,$ $\Sigma u|_V=0$ or, in other words, if $\hat \Gamma_0 u = \hat \Gamma_1 u =0$. This means that,  precisely as in the case of $\mathcal{A}_{\rm eff}$, the kernel of $\hat A_{-\kappa^2}$ necessarily belongs to the non-simple (i.e., self-adjoint) part of the symmetric operator $(A^{\delta'}_{\max})^*$.
The question of its existence and dimension admits a simple answer in terms of the graph topology. It is clear that it is trivial in the case when $G$ is a tree; in general its dimension is shown to be equal to the cyclomatic number $\chi$ of the graph $G$. In particular, this yields unitary equivalence of $\mathcal{A}_{\rm eff}$ and $\hat A_{-\kappa^2}$ in the case where $G$ contains exactly one cycle.

\begin{remark}
	The result of the present section seems to have been overlooked in a number of now-classical papers dealing with Sturm-Liouville problems on an interval with boundary conditions depending on a spectral parameter,  see e.g. \cite{Evans, Schneider, Walter, Fulton, Hinton, Shkalikov_1983}.
\end{remark}

\begin{conjecture}
The above discussion raises the question of which definition of $\delta'$-type interaction on a graph is motivated physically, i.e. whether it is the one emerging from the analysis of thin networks as the operator ${\mathcal A}_{\rm hom},$ see \eqref{eq:domain_fin_modII}--\eqref{eq:operator_fin_modII}, or the traditional (see \cite{Kuchment2}) definition \eqref{delta_prime_op}--\eqref{eq:delta-prime-conditions}. At first sight, the difference between the two operators is insignificant: it is only in their kernels. However, it can happen to be of paramount importance if, e.g., one considers an $\varepsilon$-periodic graph with $\delta'$-type matching conditions, in which case the homogenisation procedure \cite{Physics, ChEK} will lead to drastically different outcomes for the two related setups, as it relies upon a ``threshold effect" \cite{BirmanSuslina} in the behaviour of the least eigenvalue of the operator on the fundamental cell for small quasimomenta.
\end{conjecture}

\section*{Acknowledgements}

KDC, YYE are grateful for the financial support of EPSRC Grants EP/L018802/2.
KDC, AVK are grateful for the financial support of EPSRC Grants
EP/V013025/1. YYE, AVK are grateful to IIMAS--UNAM for the hospitality and financial support during the research visit when part of this work was carried out. 

\emph{Data access statement.} No new data were generated or analysed during this study.


\begin{thebibliography}{99}


\bibitem{AGW2014} Abels, H., Grubb, G., Wood, I. G., 2014. Extension theory and Kre\u\i n-type resolvent formulas for non-smooth boundary value problems. {\it J. Func. Anal.} 266(7): 4037--4100.

\bibitem{Agranovich}
Agranovich, M. S., 2006. On a mixed Poincar\'{e}-Steklov type spectral problem in a Lipschitz domain. {\it Russ. J. Math. Phys.} 13(3): 239--244.

\bibitem{Auchmuty}
Auchmuty, G., 2017. The S.V.D. of the Poisson kernel. {\it J. Fourier Anal. Appl.} 23(6): 1517--1536.


\bibitem{Grubb}
Grubb, G., 2011. The mixed boundary value problem, Krein resolvent formulas and spectral asymptotic estimates. {\it J. Math. Anal. Appl.} 382(1): 339--363.





\bibitem{BHS}
Behrndt, J., Hassi, S., de Snoo, H., 2020.
{\it Boundary Value Problems, Weyl Functions, and Differential Operators.} Monographs in Mathematics 108, Birkh\"{a}user.


\bibitem{BehrndtRohleder2015} Behrndt, J.,  Rohleder, J., 2015. Spectral analysis of selfadjoint elliptic differential operators, Dirichlet-to-Neumann maps, and abstract Weyl functions. {\it Adv. Math.} 285: 1301--1338.

\bibitem{BehrndtLanger2007} Behrndt, J., Langer, M., 2007. Boundary value problems for elliptic partial differential operators on bounded domains. {\it J. Func. Anal.,} 243(2): 536--565.


\bibitem{Kuchment2} Berkolaiko, G., Kuchment, P., 2012. \emph{Introduction to Quantum Graphs}, Mathematical Surveys and Monographs 186, American Mathematical Society.

\bibitem{Birman} Birman, M. Sh., 1956. On the self-adjoint extensions of positive definite operators. {\it Math. Sb.} 38: 431--450.

\bibitem{Birman_2004}
Birman, M. Sh., 2004. On the averaging procedure for periodic operators in a neighborhood of an edge of an internal gap.
{\it St. Petersburg Math. J.} 15(4): 507--513.

 \bibitem{BirmanSuslina} Birman, M. Sh, Suslina, T. A.,  2004. Second order periodic differential operators. Threshold properties and homogenisation. {\it St.\,Petersburg Math. J.} 15(5): 639--714.





\bibitem{Brown} Brown. R. M., 1994. The mixed problem for Laplace’s equation in a class of Lipschitz domains. {\it Comm. Partial Diff. Eqns.} 19:1217--1233.


\bibitem{BMNW2008}
Brown, M., Marletta, M., Naboko, S., Wood, I., 2008.
\newblock Boundary triples and {$M$}-functions for non-selfadjoint operators,
  with applications to elliptic {PDE}s and block operator matrices.
\newblock {\em J. Lond. Math. Soc. (2)}, 77(3): 700--718.

\bibitem{Gazzola_non_smooth}
Bucur, D., Ferrero, A., Gazzola, F., 2009. On the first eigenvalue of a fourth order Steklov problem. {\it Calc. Var.} 35: 103–-131.

\bibitem{BChVZ}
Cherednichenko, K., Bu\v{z}an\v{c}i\'{c}, M., Vel\v{c}i\'{c}, I., Žubrini\v{c}, J., 2022. Spectral and evolution analysis of composite elastic plates with high contrast. {\it J. Elasticity} 152: 79--177.








\bibitem{Physics} Cherednichenko, K. D., Ershova, Yu. Yu., Kiselev, A.V., 2019. Time-dispersive behaviour as a feature of critical contrast media, {\it SIAM J. Appl. Math.} 79(2): 690--715, 2019.

\bibitem{ChEK} Cherednichenko, K.D., Ershova, Yu. Yu., Kiselev, A.V., 2020. Effective behaviour of critical-contrast PDEs: micro-resonances, frequency conversion, and time dispersive properties. I. {\it Comm. Math. Phys.} 375: 1833--1884.


\bibitem{GrandePreuve} Cherednichenko, K., Ershova, Yu., Kiselev, A., Naboko, S., 2019. Unified approach to critical-contrast homogenisation with explicit links to time-dispersive media, {\it Trans. Moscow Math. Soc.} 80(2): 295--342.

\bibitem{CEKS2022} Cherednichenko, K. D., Ershova, Yu. Yu., Kiselev, A. V., Ryzhov, V. A., Silva, L. O., 2023. Asymptotic analysis of operator families and applications to resonant media. {\it Oper. Theory Adv. Appl.} 291: 239--311.	


\bibitem{CENS}
Cherednichenko, K.,  Ershova, Y., Naboko, S., 2021. Functional model for generalised resolvents and its application to time-dispersive media. arXiv: 2111.05387, 24\,pp.


\bibitem{CherKis}
Cherednichenko, K. D., Kiselev, A. V., 2017. Norm-resolvent convergence of one-dimensional high-contrast periodic problems to a Kronig-Penney dipole-type model. {\it Comm. Math. Phys.} 349(2): 441--480.


\bibitem{CherKisSilva} Cherednichenko K. D., Kiselev A. V., Silva, L. O., 2018.
\newblock Functional model for extensions of symmetric operators and applications to scattering theory.
\newblock {\em Netw. \& Heterog. Media,} 13(2): 191--215.

\bibitem{CherKisSilva1} Cherednichenko, K. D., Kiselev, A. V.,  Silva, L. O., 2020. Scattering theory for non-selfadjoint  extensions of symmetric operators. {\it Oper. Theory Adv. Appl.} 276: 194--230.

\bibitem{ChV} Cherednichenko, K. D., Vel\v{c}i\'{c}, I., 2022. Sharp operator-norm asymptotics for linearised elastic plates with rapidly oscillating periodic properties. {\it J. London Math. Soc.} 105(3): 1634--1680.


\bibitem{ChKVZ} Cherednichenko, K., Kiselev, A. V., Vel\v{c}i\'{c}, I., \v{Z}ubrini\'{c}, J., 2024. Effective behaviour of critical-contrast PDEs: micro-resonances, frequency conversion, and time dispersive properties. II. arXiv:2307.01125, 52\,pp.






\bibitem{ChVZ} Cherednichenko, K., Velčić, I., Žubrinić J., 2023.  Operator-norm resolvent estimates for thin elastic periodically heterogeneous rods in moderate contrast. {\it Calc. Var. Partial Differential Equations} 62, Article number: 147.

\bibitem{CKP}
Craster, R. V., Kaplunov, J., Pichugin, A. V., 2010. High-frequency homogenization for periodic media. {\it Proc. R. Soc. Lond. Ser. A} 466(2120): 2341–-2362.

\bibitem{Denzler1} Denzler, J., 1998. Bounds for the heat diffusion through windows of given area. {\it J. Math. Anal. Appl.} 217(2): 405--422.

\bibitem{Denzler2} Denzler, J., 1999. Windows of given area with minimal heat diffusion. {\it Trans. Amer. Math. Soc.} 351(2): 569--580.


\bibitem{DM}
Derkach, V. A., Malamud M. M., 1991. Generalised resolvents and the boundary value problems for Hermitian operators with gaps, {\it J. Funct. Anal.} 95: 1--95.




\bibitem{YKK}
Ershova, Y., Karpenko, I. I., Kiselev, A. V., 2016.
\newblock Isospectrality for graph {L}aplacians under the change of coupling at graph vertices.
\newblock {\em J. Spectr. Theory} 6(1): 43--66.

\bibitem{Exner}
Exner, P., Post, O., 2005. Convergence of spectra of graph-like thin manifolds. {\it J. Geom. Phys.} 54(1): 77--115.

\bibitem{Evans}
Evans, W. D., 1970. A non-selfadjoint differential operator in
$L^2_\sigma[a, b).$ {\it Quart. J. Math.} 21(3): 371--383.




\bibitem{Gazzola}
Ferrero, A., Gazzola, F., Weth, T., 2005. On a fourth order Steklov eigenvalue problem. {\it Analysis} 25(4): 315--332.


\bibitem{Fichera}
Fichera, G., 1956. Su un principio di dualit\`{a}\ per talune formole di maggiorazione relative alle equazioni differenziali. (Italian) {\it Atti Accad. Naz. Lincei Rend. Cl. Sci. Fis. Mat. Nat.} 19: 411–-418.



\bibitem{Friedlander} Friedlander, L., 2002. On the density of states of periodic
media in the large coupling limit. {\it Comm. Part. Diff. Eq.} 27(1--2): 355-380.


\bibitem{Fulton}
Fulton, C.T., 1977. Two-point boundary value problems with eigenvalue parameter contained in the boundary conditions. {\it Proc. R. Soc. Edinb A.} 77(3--4): 293--308 (1977)


\bibitem{Polterovich}
Girouard, A., Polterovich, I. 2017. Spectral geometry of the Steklov problem (survey article). {J. Spec. Theory} 7(2): 321--359.

\bibitem{Gor}Gorbachuk, V. I., Gorbachuk, M. L., 1991. {\it Boundary value
problems for operator differential equations.}
Mathematics and its
Applications (Soviet Series), 48,  Kluwer Academic Publishers.

\bibitem{Krein_lectures} Gorbachuk, M. I., Gorbachuk, V. I., 2012. {\it M.G. Krein’s Lectures on Entire Operators.} Oper. Th. Adv. Appl. 97, Birkh\"{a}user.

\bibitem{Grieser} Grieser, D., 2008. Spectra of graph neighborhoods and scattering. {\it Proc. Lond. Math. Soc.} 97(3): 718--752.











\bibitem{Hinton}
Hinton, D.B., 1977. An expansion theorem for an eigenvalue problem in the boundary conditions. {\it Quart. J. Math.}, 30(1): 33--42.

\bibitem{KisRyad}
Kiselev, A. V., Ryadovkin, K. S., 2024. Phase transition in a periodic tubular structure. {\it To appear in SIAM J. Appl. Math,} arXiv: 2303.00872.

\bibitem{Kochubej}
Ko{\v{c}}ube{\u\i}, A. N., 1975.
\newblock Extensions of symmetric operators and of symmetric binary relations.
\newblock {\em Math. Notes} 17(1): 25--28.

\bibitem{Krein} Kre\u\i n, M. G., 1947. Theory of self-adjoint extensions of semibounded hermitian operators and
applications II. {\it Mat. Sb.} 21(3): 365--404.



\bibitem{KuchmentZeng}
 Kuchment, P., Zeng, H., 2001. Convergence of spectra of mesoscopic systems collapsing onto a graph. {\it J. Math. Anal. Appl.} 258(2): 671--700.

\bibitem{KuchmentZeng2004}
 Kuchment, P.,  Zeng, H., 2003. Asymptotics of spectra of Neumann Laplacians in thin domains. {\it Contemporary Mathematics} 327, Amer. Math. Soc., Providence, Rhode Island, 199--213.

\bibitem{Kuttler}
Kuttler, J. R., 1972. Remarks on a Stekloff eigenvalue problem. {\it SIAM J. Numer. Anal.} 9: 1--5.


\bibitem{Levitin_et_al2018} Levitin M., Parnovski, L., Polterovich, I., Scher, D., 2022. Sloshing, Steklov and corners:
Asymptotics of sloshing eigenvalues. {\it J. Anal. Math.} 146: 65--125.  

\bibitem{MPP}
Mikhailova, A. B., Pavlov, B. S., Prokhorov, L. V., 2007. Intermediate Hamiltonian via Glazman’s splitting and analytic perturbation
for meromorphic matrix-functions. {\it Math. Nachr.} 280(12): 1376--1416.



\bibitem{Naimark1940}
Neumark, M., 1940. Spectral functions of a symmetric operator. (Russian) {Bull. Acad. Sci. URSS. Sér. Math. [Izvestia Akad. Nauk SSSR]} 4: 277--318.

\bibitem{Naimark1943}
Neumark, M., 1943 Positive definite operator functions on a commutative group. (Russian) {\it Bull. Acad. Sci. URSS Sér. Math. [Izvestia Akad. Nauk SSSR]} 7: 237--244.


\bibitem{Pavlov_internal_structure}
Pavlov, B. S., 1984. A model of zero-radius potential with internal
structure. (Russian) {\it Teoret. Mat. Fiz.} 59(3), 345--353.

\bibitem{Pavlov1987}
Pavlov, B. S., 1987. The theory of extensions, and explicitly soluble models. {\it Russia Math. Surveys} 42(6), 127--168.

\bibitem{Pavlov_2008}
 Pavlov, B. S., 2008. Krein formula with compensated singularities for the ND-mapping and the generalized Kirchhoff condition at the Neumann Schrödinger junction. {\it Russ. J. Math. Phys.} 15(3): 364--388.

\bibitem{Post}
Post, O., 2012. {\it Spectral Analysis on Graph-Like Spaces,} Lecture Notes in Mathematics 2039, Springer.

\bibitem{RSch}
Rubinstein, J., Schatzman, M., 2001. Variational Problems on multiply connected thin strips I: Basic estimates and convergence of the Laplacian spectrum. {\it Arch. Rational Mech. Anal.} 160: 271--308.


\bibitem{Ryzhov_later} Ryzhov, V., 2009. Weyl-Titchmarsh function of an abstract boundary value problem, operator colligations, and linear systems with boundary control. {\it Complex Anal. Oper. Theory} 3(1): 289--322.

\bibitem{Ryzh_spec}
 Ryzhov, V., 2020.
 \newblock Linear operators and operator functions associated with spectral boundary value problems.
 {\it Oper. Th. Adv. Appl.} 276: 576--626.


\bibitem{Schechter}
Schechter, M., 1960. A generalization of the problem of transmission. {\it Ann.
Scuola Norm. Sup. Pisa} 14(3): 207–-236.

\bibitem{Schmudgen}
Schm{\"u}dgen, K., 2012.
\newblock {\em Unbounded Self-Adjoint Operators on Hilbert Space.} Graduate Texts in Mathematics 265.
\newblock Springer, Dordrecht.

\bibitem{Schneider}
Schneider, A., 1974. A note on eigenvalue problems with eigenvalue
parameter in the boundary conditions. {\it Math. Z.} 136: 163--167.

\bibitem{Shkalikov_1983}
Shkalikov, A. A., 1983. Boundary problems for ordinary differential equations with parameter in the boundary conditions. {\it J. Soviet. Math.} 33(6), 1311--1342.




\bibitem{Strauss} Strauss, A. V., 1954. Generalised resolvents of symmetric operators (Russian), {\it Izv. Akad. Nauk SSSR, Ser. Mat.} 18: 51--86.

\bibitem{Strauss_survey} \v{S}traus, A. V., 1999. Functional models and  generalized spectral functions of symmetric operators. {\it St. Petersburg Math. J.} 10(5): 733--784.


\bibitem{Brown2} Taylor, J. L., Ott, K. A., Brown, R. M., 2013. The mixed problem in Lipschitz domains with general decompositions of the boundary. {\it Trans. Amer. Math. Soc.} 365(6): 2895--2930.





\bibitem{Tretter}  Tretter, C., 2008. {\it Spectral Theory of Block Operator Matrices and Applications.} Imperial College Press, London.





\bibitem{Vishik} Vi\v sik, M. I., 1952. On general boundary problems for elliptic differential equations. (Russian) {\it Trudy
Moskov. Mat. Ob\v sc.} 1: 187--246.

\bibitem{Walter}
Walter, J., 1973. Regular eigenvalue problems with eigenvalue
parameter in the boundary condition. {\it Math. Z.} 133: 301--312.



\bibitem{Zaremba} Zaremba, S., 1910. Sur un probl\'eme mixte relatif \`a l'\'equation de Laplace. (French) {\it Bulletin International de l'Acad\'emie des Sciences de Cracovie. Classe des Sciences Math\'ematiques et Naturelles, Serie A: Sciences Math\'ematiques,} 313–-344.












\end{thebibliography}
\end{document}